\newcommand{\R}{\mbox{\rm I\kern-.18em R}}
\newcommand{\fR}{\mbox{\footnotesize\rm I\kern-.18em R}}
\newcommand{\sR}{\mbox{\small\rm I\kern-.18em R}}
\newcommand{\N}{\mbox{\rm I\kern-.18em N}}
\newcommand{\dist}{\mathop{\rm dist}\nolimits}
\newcommand{\CO}{{\cal O}}
\newcommand{\CPFa}{C_{\rm PF,SS}}
\newcommand{\CPFb}{C_{\rm PF,I}}
\newcommand{\be}{\begin{equation}}
\newcommand{\ee}{\end{equation}}
\newtheorem{theorem}{Theorem}[section]
\newtheorem{remark}[theorem]{Remark}
\newtheorem{lemma}[theorem]{Lemma}
\newtheorem{corollary}[theorem]{Corollary}
\newtheorem{proposition}[theorem]{Proposition}
\title{
On the equivalence of fractional-order Sobolev semi-norms
\thanks{Supported by CONICYT-Chile through Fondecyt project 1110324
        and Anillo ACT1118 (ANANUM).}
}
\author{
Norbert Heuer
\thanks{
Facultad de Matem\'aticas, Pontificia Universidad Cat\'olica de Chile,
Avenida Vicu\~na Mackenna 4860, Macul, Santiago, Chile,
email: {\tt nheuer@mat.puc.cl}}
}
\begin{document}
\date{}
\maketitle

\bigskip
\begin{abstract}
We present various results on the equivalence and mapping properties
under affine transformations
of fractional-order Sobolev norms and semi-norms of orders between zero and one.
Main results are mutual estimates of the three semi-norms of Sobolev-Slobodeckij,
interpolation and quotient space types. In particular, we show that the former two are
uniformly equivalent under affine mappings that ensure shape regularity of the
domains under consideration.

\bigskip
\noindent
{\em Key words}: fractional-order Sobolev spaces, semi-norms,
                 Poincar\'e-Friedrichs' inequality

\noindent
{\em AMS Subject Classification}:
46E35,          
47A30,          
65N38.          
\end{abstract}

\section{Introduction}

Sobolev norms and semi-norms play a central role in the numerical analysis
of discretization methods for partial differential equations. For instance,
standard finite element error analysis is essentially a combination of
the Bramble-Hilbert lemma and transformation properties of Sobolev (semi-) norms.
These properties are also central to the area of preconditioners for (and
based on) variational methods. More precisely, arguments based on finite dimensions
of local spaces are inherently connected with scaling arguments to keep dimensions
bounded. Norms are usually not scalable. That is, the corresponding equivalence numbers
behave differently with respect to a scaling parameter like the diameter $D_{\CO}$
of the domain $\CO$ when the domain under consideration is transformed by an affine map
that maintains shape regularity (i.e., the ratio of $D_{\CO}$ and the ``inner diameter''
of $\CO$ is bounded). This can be usually fixed only when
essential boundary conditions are present. An example is using the $H^1$-semi-norm
as norm in $H^1_0$. More generally, semi-norms have better scaling properties: usually
they can be defined so that equivalence numbers are of the same order with
respect to $D_{\CO}$ under shape-regular affine transformations of the domain.

Whereas properties of Sobolev (semi-) norms under smooth transformations or
simple scalings are straightforward as long as their orders are integer, things
are getting more complicated for fractional-order Sobolev norms. Such norms appear,
e.g., in a natural way when considering boundary integral equations of the
first kind \cite{McLean_00_SES,HsiaoW_08_BIE}
or when studying the regularity of elliptic problems in non-convex
polygonal domains \cite{Grisvard_85_EPN}.
There are different
ways to define fractional-order Sobolev norms and they all have advantages and
disadvantages (standard references are \cite{LionsMagenes,Adams}).
Different norm variants are known to be equivalent. But dependence
of the equivalence constants on the order and the domain are more involved.

In this paper we analyze the equivalence of different variants of fractional-order
semi-norms of positive orders bounded by one. The use of semi-norms is essential
to guarantee scaling properties and we don't know of any publication that analyzes
their equivalence.

The rest of the paper is organized as follows. In Section~\ref{sec_S} we
collect all definitions and technical results.
In Section~\ref{sec_S1} we recall two definitions of norms and
define three different semi-norms: one of the Sobolev-Slobodeckij type, one by
interpolation, and one of a quotient space type. Section~\ref{sec_S3} is devoted
to basic equivalence estimates. In particular, we present Poincar\'e-Friedrichs'
inequalities for the Sobolev-Slobodeckij and interpolation semi-norms
(Propositions~\ref{prop_PF1} and \ref{prop_PF2}).
A direct proof in the case of the Sobolev-Slobodeckij semi-norm
is cited from Faermann \cite{Faermann_02_LAS}. An indirect proof for
the interpolation semi-norm is standard, and is given for completeness.
Affine transformation properties of norms
(also given for completeness) and semi-norms are analyzed in Section~\ref{sec_scaling}.
Eventually, in Section~\ref{sec_main} we combine the intermediate results
to show the equivalence of all the semi-norms under consideration, with
explicit equivalence numbers depending on the domain via its transformation from
a reference domain (Theorems~\ref{thm1}--\ref{thm3}).
In Theorem~\ref{thm_main} we resume the results in a form that is appropriate
for affine maps that maintain shape-regularity of the domain.
In particular, it shows (i) the uniform (with respect to $D_{\CO}$)
equivalence of the Sobolev-Slobodeckij and the interpolation semi-norms,
(ii) that the Sobolev-Slobodeckij and quotient space semi-norms are uniformly
equivalent as long as the diameter of the domain is bounded from
above, and (iii) that the interpolation and quotient space semi-norms
can be uniformly bounded mutually in one direction depending on whether the diameter
of the domain is bounded from above or from below.
Finally, in Corollary~\ref{cor_scale} we collect the scaling properties
of all the norms and semi-norms studied in this paper that have this property.

\section{Sobolev norms} \label{sec_S}

In this section we recall definitions of several Sobolev (semi-) norms and collect
technical results that are needed to prove our main results in Section~\ref{sec_main},
or which are interesting in its own.

Throughout the paper, $\CO\subset\R^n$ denotes a generic
bounded connected Lipschitz domain.
We consider the usual $L^2(\CO)$- and
$H^1(\CO)$-norms with notations $\|\cdot\|_{0,\CO}$ and
$\|\cdot\|_{1,\CO}$, respectively and the $H^1(\CO)$-semi-norm
$|\cdot|_{1,\CO}$. Here and in the following, in all types of norms,
the underlying domain of definition $\CO$ will be occasionally dropped from the
notation when not being ambiguous.

\subsection{Fractional-order norms and semi-norms} \label{sec_S1}

There are several ways to define Sobolev norms. We use the ones defined by
a double integral (Sobolev-Slobodeckij) and by interpolation. For the latter we
use the so-called real K-method, cf.~\cite{BerghL_76_IS}.
For $0<s<1$, the interpolation norm in the fractional-order Sobolev space $H^s(\CO)$
is defined by
\[
   \|v\|_{[L^2(\CO),H^1(\CO)]_s}
   :=
   \|v\|_{L^2(\CO),H^1(\CO),s}
   :=
   \left( \int_0^\infty
          t^{-2s} \inf_{v=v_0+v_1}
          \Bigl(\|v_0\|_{0,\CO}^2 + t^2\, \|v_1\|_{1,\CO}^2\Bigr)
          \frac{dt}{t}
   \right)^{1/2}.
\]
Here and in the following, the notation
$\inf_{v=v_0+v_1} \Bigl(\|v_0\|_{0,\CO}^2 + t^2\, \|v_1\|_{1,\CO}^2\Bigr)$
implies that the infimum is taken over $v_0\in L^2(\CO)$ and $v_1\in H^1(\CO)$,
or corresponding spaces as indicated by the respective norms.

We also define the interpolation space
\[
   \tilde H^s(\CO) = \left[ L^2(\CO), H^1_0(\CO) \right]_s
\]
with corresponding notation for the norm.
The notation $\tilde H^s$ is used by Grisvard and is common in the
boundary element literature, whereas the notation
$H_{00}^s=\tilde H^s$
is used by Lions and Magenes and is common in the finite element literature.

The Sobolev-Slobodeckij variant of these norms is defined (for $0<s<1$) by
\begin{equation} \label{SS-norm}
  \|v\|_{H^s(\CO)}
  :=
  \|v\|_{s,\CO}
  :=
  \left(\|v\|_{L^2(\CO)}^2
  +
  \int_{\CO} \int_{\CO} \frac{|v(x)-v(y)|^2}{|x-y|^{n+2s}}\,dx\,dy
  \right)^{1/2},
\end{equation}
\[
  \|v\|_{\tilde H^s(\CO)}
  :=
  \|v\|_{\sim,s,\CO}
  :=
  \left(
  \|v\|_{H^s(\CO)}^2
  +
  \|\frac{v(x)}{\dist(x,\partial \CO)^s}\|_{L^2(\CO)}^2
  \right)^{1/2}
  \qquad\text{(preliminary version)}.
\]
The corresponding semi-norms are
\[
   |v|_{[L^2(\CO),H^1(\CO)]_s}
   :=
   |v|_{L^2(\CO),H^1(\CO),s}
   :=
   \left( \int_0^\infty t^{-2s}
             \inf_{v=v_0+v_1} \Bigl(\|v_0\|_{0,\CO}^2 + t^2\, |v_1|_{1,\CO}^2\Bigr)
          \frac{dt}{t}
   \right)^{1/2}
\]
and
\[
  |v|_{H^s(\CO)}
  :=
  |v|_{s,\CO}
  :=
  \left(
  \int_{\CO} \int_{\CO} \frac{|v(x)-v(y)|^2}{|x-y|^{n+2s}}\,dx\,dy
  \right)^{1/2}.
\]
Additionally, it is useful to define the semi-norm of quotient space type
\[
  |v|_{s,\CO,\inf} := \|v\|_{H^s(\CO)/\fR}
                      = \inf_{c\in\fR} \|v+c\|_{s,\CO}.
\]

\subsection{Equivalence of semi-norms on a fixed domain} \label{sec_S3}

The aim of this section is to study equivalences of the semi-norms previously defined,
on a fixed domain. Together with mapping properties (provided in Section~\ref{sec_scaling})
these estimates are needed to prove our main results in Section~\ref{sec_main}.
Proofs are based on a standard norm equivalence and specific Poincar\'e-Friedrichs'
inequalities, which are also recalled here.

It is well known that for Lipschitz domains different definitions of Sobolev norms are
equivalent. However, equivalence constants depend usually on the order and the
domain under consideration. In particular, for a bounded
Lipschitz domain $\CO$, the norms $\|\cdot\|_{s,\CO}$ and
$\|\cdot\|_{L^2(\CO),H^1(\CO),s}$ are equivalent for $0<s<1$,
cf.~\cite{LionsMagenes,Grisvard_85_EPN,McLean_00_SES}.
Such equivalences are shown by corresponding equivalences
on $\R^n$ and the use of appropriate extension operators,
cf.~\cite{BrennerS_94_MTF}, see also \cite{DevoreS_93_BSD} for non-Lipschitz
domains. In particular, the norms previously defined are uniformly equivalent
for $s$ in a closed subset of $(0,1)$, see~\cite{Heuer_01_ApS}.

Here, for the norms, we don't elaborate on the dependence of the equivalence
constants on $s$ and $\CO$.
We rather give them specific names to be used in estimates to follow.

\begin{proposition}[equivalence of norms] \label{prop_norms}
For a bounded Lipschitz domain $\CO\subset\R^n$ and for given
$s\in(0,1)$ there exist constants $k(s,\CO), K(s,\CO)>0$ such that
\[
   k(s,\CO)\,\|v\|_{L^2(\CO),H^1(\CO),s}
   \le \|v\|_{s,\CO}
   \le K(s,\CO)\, \|v\|_{L^2(\CO),H^1(\CO),s}
   \qquad\forall v\in H^s(\CO).
\]
\end{proposition}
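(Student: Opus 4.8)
The plan is to establish the norm equivalence by reducing everything to the already-known equivalence on $\R^n$ via a bounded extension operator, together with a uniform boundedness estimate of the relevant restriction maps. First I would recall that on the whole space, the Sobolev-Slobodeckij norm $\|\cdot\|_{s,\R^n}$ and the interpolation norm $\|\cdot\|_{L^2(\R^n),H^1(\R^n),s}$ coincide up to a constant $c(s,n)$ depending only on $s$ and the dimension; this is the classical identity obtainable by Fourier transform, since both norms are spectrally equivalent to $\bigl(\int(1+|\xi|^2)^s|\hat v(\xi)|^2\,d\xi\bigr)^{1/2}$. The role of the extension operator is then to transport this equivalence from $\R^n$ to the bounded Lipschitz domain $\CO$.

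The key steps, in order, are: (i) invoke a continuous linear extension operator $E\colon H^1(\CO)\to H^1(\R^n)$ that is simultaneously bounded $L^2(\CO)\to L^2(\R^n)$; for Lipschitz domains such a simultaneous extension is classical (Calder\'on--Stein, or the Sobolev-tower version in Brenner--Scott \cite{BrennerS_94_MTF}). (ii) Since $E$ is bounded on both endpoint spaces $L^2$ and $H^1$, the K-method produces, for each $t$, the estimate $\inf_{Ev=w_0+w_1}(\|w_0\|_{0,\R^n}^2+t^2\|w_1\|_{1,\R^n}^2)\le C_E^2\inf_{v=v_0+v_1}(\|v_0\|_{0,\CO}^2+t^2\|v_1\|_{1,\CO}^2)$ by plugging in $w_i=Ev_i$; integrating in $t$ against $t^{-2s-1}\,dt$ gives $\|Ev\|_{L^2(\R^n),H^1(\R^n),s}\le C_E\,\|v\|_{L^2(\CO),H^1(\CO),s}$. (iii) In the reverse direction, the restriction operator from $\R^n$ to $\CO$ is a norm-one map on both $L^2$ and $H^1$, so by the same K-functional argument $\|v\|_{L^2(\CO),H^1(\CO),s}\le\|Ev\|_{L^2(\R^n),H^1(\R^n),s}$ (restricting $Ev$ recovers $v$, and any decomposition of $Ev$ restricts to a decomposition of $v$). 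For the Sobolev-Slobodeckij norm the analogous facts are even simpler: $\|v\|_{s,\CO}\le\|Ev\|_{s,\R^n}$ directly because $\CO\times\CO\subset\R^n\times\R^n$ and the integrand is pointwise nonnegative, while $\|Ev\|_{s,\R^n}\le C_E\,\|v\|_{1,\CO}$ is not quite what we want --- instead one uses the norm equivalence on $\R^n$ together with step (ii). (iv) Chain these: $\|v\|_{s,\CO}\le\|Ev\|_{s,\R^n}\le c(s,n)\|Ev\|_{L^2(\R^n),H^1(\R^n),s}\le c(s,n)C_E\|v\|_{L^2(\CO),H^1(\CO),s}$, which gives the right-hand inequality with $K(s,\CO)=c(s,n)C_E$; and conversely $\|v\|_{L^2(\CO),H^1(\CO),s}\le\|Ev\|_{L^2(\R^n),H^1(\R^n),s}\le c(s,n)^{-1}\|Ev\|_{s,\R^n}\le c(s,n)^{-1}C_E\|v\|_{s,\CO}$ --- but the last step needs $\|Ev\|_{s,\R^n}\le C_E\|v\|_{s,\CO}$, which is a boundedness property of $E$ as a map $H^s(\CO)\to H^s(\R^n)$; this again follows from interpolation, since $E$ is bounded on $L^2$ and $H^1$ and $H^s$ is an interpolation space between them (here one uses $\|Ev\|_{s,\R^n}\le c\|Ev\|_{L^2(\R^n),H^1(\R^n),s}\le cC_E\|v\|_{L^2(\CO),H^1(\CO),s}\le cC_E'\|v\|_{s,\CO}$, the last by the already-established equivalence in the other direction, taken together as a fixed finite constant).

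The main obstacle is purely organizational rather than deep: one must be careful that the chain of inequalities does not become circular, i.e., that the reverse inequality $\|v\|_{L^2(\CO),H^1(\CO),s}\le K'\|v\|_{s,\CO}$ is obtained without presupposing $\|v\|_{s,\CO}\le K\|v\|_{L^2(\CO),H^1(\CO),s}$. The clean way to avoid this is to route both directions through $\R^n$ independently: for "$\le$" use restriction-is-bounded on $\CO$ (Sobolev-Slobodeckij comparison) plus $\R^n$-equivalence plus $E$-boundedness in the interpolation scale; for "$\ge$" use restriction-is-bounded in the interpolation scale plus $\R^n$-equivalence plus $E$-boundedness in the Sobolev-Slobodeckij scale. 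The boundedness of $E$ in the Sobolev-Slobodeckij scale $H^s(\CO)\to H^s(\R^n)$ for a Lipschitz domain can itself be cited from \cite{BrennerS_94_MTF} or deduced by interpolating the two endpoint bounds, since $H^s$ is a retract of the interpolation space; this is where most of the genuine content sits, but as stated in the text preceding the proposition these facts are standard and we only need to name the resulting constants $k(s,\CO)$ and $K(s,\CO)$, absorbing $c(s,n)$, $C_E$, and the norm of $E$ in the Sobolev-Slobodeckij scale into them. Hence no optimality in $s$ or $\CO$ is claimed, exactly as the statement permits.
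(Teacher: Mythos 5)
The paper does not actually prove Proposition~\ref{prop_norms}: it cites \cite{McLean_00_SES}, and the text immediately preceding it names precisely the strategy you follow (equivalence of the two norms on $\R^n$ plus extension operators). So your overall route is the intended one, and your steps (i)--(iii) together with the chain for the upper bound $\|v\|_{s,\CO}\le K(s,\CO)\,\|v\|_{L^2(\CO),H^1(\CO),s}$ are sound: restriction of the double integral to $\CO\times\CO$, the Fourier characterization on $\R^n$, and the K-functional estimate $\|Ev\|_{L^2(\R^n),H^1(\R^n),s}\le C_E\,\|v\|_{L^2(\CO),H^1(\CO),s}$ combine without circularity.

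The gap is in the lower bound. There you need $\|Ev\|_{s,\R^n}\le C\,\|v\|_{s,\CO}$, and the justification you give --- $\|Ev\|_{s,\R^n}\le c\,\|Ev\|_{L^2(\R^n),H^1(\R^n),s}\le c\,C_E\,\|v\|_{L^2(\CO),H^1(\CO),s}\le c\,C_E'\,\|v\|_{s,\CO}$, ``the last by the already-established equivalence in the other direction'' --- is circular: the direction already established yields $\|v\|_{L^2(\CO),H^1(\CO),s}\ge K^{-1}\|v\|_{s,\CO}$, whereas the final step of this chain is exactly the inequality $\|v\|_{L^2(\CO),H^1(\CO),s}\le k^{-1}\|v\|_{s,\CO}$ that the proposition's lower bound asserts. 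The same objection applies to the ``retract'' remark: interpolating the two endpoint bounds shows that $E$ is bounded from $\bigl[L^2(\CO),H^1(\CO)\bigr]_s$ (interpolation norm on the domain side) into $H^s(\R^n)$; converting the domain-side norm into the intrinsic Sobolev--Slobodeckij norm $\|\cdot\|_{s,\CO}$ is again the statement being proven. The fact you actually need --- existence, for a Lipschitz domain, of an extension operator bounded from $H^s(\CO)$ equipped with the intrinsic double-integral norm into $H^s(\R^n)$ --- is the genuinely nontrivial ingredient; it must be proven directly (reflection, Whitney or Jones-type constructions, as in \cite{DevoreS_93_BSD} or the appendix of \cite{McLean_00_SES}) or cited as such, and does not follow from the two endpoint bounds by abstract interpolation. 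Since you do offer the citation route as an alternative, supplying that one fact as an independent reference closes the argument; as written, though, the self-contained derivation of the lower bound does not stand on its own.
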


For a proof see, e.g., \cite{McLean_00_SES}.

It is well known that, on bounded Lipschitz domains, lower-order norms can be bounded
by higher-order semi-norms plus finite rank terms. Such estimates are referred to
as Poincar\'e-Friedrichs' inequalities. For integer-order norms
there are direct proofs with explicit constants (depending on orders and
domains) \cite[Th\'eor\`eme 1.3]{Necas_67_MDT} and attention has received
finding best constants and deriving improved weighted estimates, see, e.g.,
\cite{PayneW_60_OPI,Verfuerth_99_NPA} and \cite{DrelichmanD_08_IPI}, respectively.
We need such a Poincar\'e-Friedrichs' inequality for fractional-order
norms on bounded domains (for unbounded domains, see \cite{MouhotRS_11_FPI}),
and refer to \cite[Lemma 3.4]{Faermann_02_LAS} for a proof.
This proof is given for two dimensions but immediately extends to the general case.

\begin{proposition}[Poincar\'e-Friedrichs inequality, Sobolev-Slobodeckij semi-norm]
\label{prop_PF1}
Let $\CO\subset\R^n$ be a bounded domain, and $s\in(0,1)$. Then there holds
\[
   \|v\|_{0,\CO} \le \CPFa(s,\CO) \Bigl(|v|_{s,\CO} + |\int_\CO v|\Bigr)
   \qquad\forall v\in H^s(\CO)
\]
with
\[
   \CPFa(s,\CO) = |\CO|^{-1/2} \max\{1,2^{-1/2} D_{\CO}^{n/2+s}\}.
\]
Here, $|\CO|$ denotes the area of $\CO$ and, as mentioned in the introduction,
$D_{\CO}$ is its diameter.
\end{proposition}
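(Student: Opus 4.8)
The plan is to pass through the variance of $v$ over $\CO$ and then insert the crude bound $|x-y|\le D_{\CO}$ into the double integral.

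First I would introduce the mean value $m:=|\CO|^{-1}\int_\CO v$. Since $v-m$ is $L^2(\CO)$-orthogonal to the constants, Pythagoras gives the exact splitting
\[
  \|v\|_{0,\CO}^2 = \|v-m\|_{0,\CO}^2 + |\CO|\,m^2 = \|v-m\|_{0,\CO}^2 + \frac{1}{|\CO|}\Bigl(\int_\CO v\Bigr)^2 .
\]
The crucial point is the classical variance identity
\[
  \|v-m\|_{0,\CO}^2 = \frac{1}{2|\CO|}\int_\CO\int_\CO |v(x)-v(y)|^2\,dx\,dy ,
\]
obtained by expanding the square and using $\int_\CO\int_\CO v(x)\,v(y)\,dx\,dy=(\int_\CO v)^2$; it is exactly the factor $\tfrac12$ here that accounts for the $2^{-1/2}$ appearing in $\CPFa(s,\CO)$.

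Next, since $|x-y|\le D_{\CO}$ for all $x,y\in\CO$, I would bound $|v(x)-v(y)|^2\le D_{\CO}^{n+2s}\,|v(x)-v(y)|^2/|x-y|^{n+2s}$ under the integral, which yields $\|v-m\|_{0,\CO}^2\le \tfrac{1}{2|\CO|}\,D_{\CO}^{n+2s}\,|v|_{s,\CO}^2$. Substituting this together with $|\CO|\,m^2=|\CO|^{-1}(\int_\CO v)^2$ back into the splitting gives
\[
  \|v\|_{0,\CO}^2 \le \max\Bigl\{\tfrac{1}{|\CO|},\tfrac{D_{\CO}^{n+2s}}{2|\CO|}\Bigr\}\Bigl(|v|_{s,\CO}^2+\Bigl(\int_\CO v\Bigr)^2\Bigr),
\]
and the claimed estimate follows from $a^2+b^2\le(a+b)^2$ for $a,b\ge0$ after taking square roots, with $\CPFa(s,\CO)=|\CO|^{-1/2}\max\{1,2^{-1/2}D_{\CO}^{n/2+s}\}$.

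There is no real obstacle here: all steps are elementary and dimension-independent, which is also why Faermann's two-dimensional argument carries over verbatim. The only thing one must get right is to combine the variance identity with the orthogonal (Pythagorean) splitting rather than with the triangle inequality $\|v\|_{0,\CO}\le\|v-m\|_{0,\CO}+\sqrt{|\CO|}\,|m|$, since the latter would only produce the weaker constant $|\CO|^{-1/2}\max\{1,D_{\CO}^{n/2+s}\}$.
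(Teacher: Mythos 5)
Your proof is correct and is essentially the argument of Faermann's Lemma~3.4 that the paper cites in place of reproducing a proof: split off the mean value $m=|\CO|^{-1}\int_\CO v$, use the variance identity $\|v-m\|_{0,\CO}^2=\tfrac{1}{2|\CO|}\int_\CO\int_\CO|v(x)-v(y)|^2\,dx\,dy$, and insert the crude bound $|x-y|\le D_{\CO}$; the constant you obtain matches $\CPFa(s,\CO)$ exactly, and your observation that every step is dimension-independent is precisely why the paper can assert that the two-dimensional proof extends verbatim. One small inaccuracy in your closing aside: the triangle-inequality route $\|v\|_{0,\CO}\le\|v-m\|_{0,\CO}+|\CO|^{-1/2}|\int_\CO v|$ in fact yields the \emph{same} constant $|\CO|^{-1/2}\max\{1,2^{-1/2}D_{\CO}^{n/2+s}\}$, since the factor $2^{-1/2}$ already multiplies $|v|_{s,\CO}$ before one takes the maximum of the two coefficients, so the Pythagorean splitting is a matter of taste rather than necessity --- this does not affect the validity of your proof.
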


\begin{lemma} \label{la1}
Let $\CO\subset\R^n$ be a bounded, connected Lipschitz domain.
Then there holds
\[
   |v|_{s,\CO}^2
   \le |v|_{s,\CO,\inf}^2
   =
   |v|_{s,\CO}^2 + \inf_{c\in\fR} \|v+c\|_{0,\CO}^2
   \le
   (1+\CPFa^2) |v|_{s,\CO}^2
\]
for any $v\in H^s(\CO)$ and $s\in (0,1)$.
Here, $\CPFa=\CPFa(s,\CO)$ is the number from Proposition~\ref{prop_PF1}.
\end{lemma}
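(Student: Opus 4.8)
The plan is to establish the chain of inequalities from left to right. The first inequality $|v|_{s,\CO}^2 \le |v|_{s,\CO,\inf}^2$ is almost immediate: by definition $|v|_{s,\CO,\inf}^2 = \inf_{c\in\fR}\|v+c\|_{s,\CO}^2$, and since $\|v+c\|_{s,\CO}^2 = \|v+c\|_{0,\CO}^2 + |v+c|_{s,\CO}^2$ while the Sobolev-Slobodeckij semi-norm annihilates constants (the integrand $|(v+c)(x)-(v+c)(y)|^2$ equals $|v(x)-v(y)|^2$), we get $\|v+c\|_{s,\CO}^2 = \|v+c\|_{0,\CO}^2 + |v|_{s,\CO}^2 \ge |v|_{s,\CO}^2$ for every $c$, hence the infimum is $\ge |v|_{s,\CO}^2$ as well. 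The same observation gives the middle equality at once: the $|v|_{s,\CO}^2$ term comes out of the infimum since it is independent of $c$, leaving $|v|_{s,\CO,\inf}^2 = |v|_{s,\CO}^2 + \inf_{c\in\fR}\|v+c\|_{0,\CO}^2$.

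The remaining work is the upper bound $\inf_{c\in\fR}\|v+c\|_{0,\CO}^2 \le \CPFa^2\,|v|_{s,\CO}^2$. Here I would make the canonical choice $c = c_0 := -|\CO|^{-1}\int_\CO v$, so that $w := v + c_0$ has $\int_\CO w = 0$. Then $\inf_{c\in\fR}\|v+c\|_{0,\CO}^2 \le \|w\|_{0,\CO}^2$. Now apply Proposition \ref{prop_PF1} to $w$: since $\int_\CO w = 0$, the finite-rank term $|\int_\CO w|$ vanishes, giving $\|w\|_{0,\CO} \le \CPFa(s,\CO)\,|w|_{s,\CO}$. Finally $|w|_{s,\CO} = |v + c_0|_{s,\CO} = |v|_{s,\CO}$ because the semi-norm is insensitive to the additive constant $c_0$, so $\|w\|_{0,\CO}^2 \le \CPFa^2\,|v|_{s,\CO}^2$, and combining with the middle equality yields $|v|_{s,\CO,\inf}^2 \le (1+\CPFa^2)\,|v|_{s,\CO}^2$, as claimed.

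There is essentially no serious obstacle here; the lemma is a routine consequence of Proposition \ref{prop_PF1} together with the translation-invariance of the double-integral semi-norm. The only point requiring a modicum of care is making sure that $v \in H^s(\CO)$ guarantees $\int_\CO v$ is well defined and finite (which holds since $H^s(\CO)\subset L^2(\CO)\subset L^1(\CO)$ on the bounded domain $\CO$, so $c_0$ makes sense), and that the mean-zero shift $w$ still lies in $H^s(\CO)$ so that Proposition \ref{prop_PF1} is applicable — both are clear. I would present the argument in the order above: annihilation of constants to get the first inequality and the middle equality, then the mean-zero choice plus Proposition \ref{prop_PF1} for the final bound.
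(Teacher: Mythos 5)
Your proposal is correct and follows essentially the same route as the paper: translation-invariance of the double-integral semi-norm gives the first inequality and the middle equality, and Proposition~\ref{prop_PF1} applied with the mean-zero constant yields the upper bound (the paper phrases this as taking the infimum over $c$ of the Poincar\'e--Friedrichs bound, which is minimized by exactly your choice $c_0=-|\CO|^{-1}\int_\CO v$).
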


\begin{proof}
By definition of $|\cdot|_{s,\CO}$ there holds for any $c\in\R$
and any $v\in H^s(\CO)$ (we now drop $\CO$ from the notation)
\[
   |v|_{s} = |v+c|_{s}.
\]
Therefore
\[
   |v|_{s} \le \inf_{c\in\fR} \|v+c\|_{s} = |v|_{s,\inf}
\]
which is the first assertion.
By the initial argument and the definition of the Sobolev-Slobodeckij norm
one also finds that
\[
   |v|_{s,\inf}^2
   =
   \inf_{c\in\fR} \|v+c\|_{s}^2
   = \inf_{c\in\fR} \|v+c\|_{0}^2 + |v|_{s}^2.
\]
This is the second assertion.

The last relation and the Poincar\'e-Friedrichs' inequality (Proposition~\ref{prop_PF1})
lead to
\[
   |v|_{s,\inf}^2
   \le
   \CPFa^2 \inf_{c\in\fR} \Bigl( |v|_s + |\int_\CO (v+c)|\Bigr)^2 + |v|_{s}^2
   =
   (1+\CPFa^2) |v|_{s}^2.
\]
This finishes the proof.
\end{proof}

\begin{lemma} \label{la2}
Let $\CO\subset\R^n$ be a bounded Lipschitz domain. There holds
\[
   k^2
   |v|_{L^2(\CO),H^1(\CO),s}^2
   \le
   |v|_{s,\CO,\inf}^2
   \le
   3K^2 |v|_{L^2(\CO),H^1(\CO),s}^2
   +
   \frac {K^2}{s(1-s)}
   \inf_{c\in\fR} \|v+c\|_{0,\CO}^2
\]
for any $v\in H^s(\CO)$ and $s\in (0,1)$.
Here, $k=k(s,\CO)$ and $K=K(s,\CO)$ are the numbers from Proposition~\ref{prop_norms}.
\end{lemma}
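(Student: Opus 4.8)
The plan is to deduce the lemma from the norm equivalence of Proposition~\ref{prop_norms} together with two elementary facts about the interpolation quantities: that the interpolation semi-norm is insensitive to adding constants, and that it is dominated by the interpolation norm. The only nontrivial ingredient is a reverse estimate controlling the interpolation \emph{norm} of a function by its interpolation \emph{semi-norm} plus an $L^2$-term, with the explicit factor $1/(s(1-s))$.

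For the lower bound, note first that replacing $v_1$ by $v_1+c$ changes neither $\|v_0\|_{0,\CO}$ nor $|v_1|_{1,\CO}$, so the decomposition cost functional defining $|\cdot|_{L^2(\CO),H^1(\CO),s}$ is unchanged under adding a constant; hence $|v+c|_{L^2(\CO),H^1(\CO),s}=|v|_{L^2(\CO),H^1(\CO),s}$ for every $c\in\R$. Moreover $|v_1|_{1,\CO}\le\|v_1\|_{1,\CO}$, so for each $t$ the semi-norm integrand is no larger than the norm integrand, giving $|w|_{L^2(\CO),H^1(\CO),s}\le\|w\|_{L^2(\CO),H^1(\CO),s}$. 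Applying Proposition~\ref{prop_norms} to $w=v+c$,
\[
   \|v+c\|_{s,\CO}^2
   \ge k^2\|v+c\|_{L^2(\CO),H^1(\CO),s}^2
   \ge k^2|v+c|_{L^2(\CO),H^1(\CO),s}^2
   = k^2|v|_{L^2(\CO),H^1(\CO),s}^2,
\]
and taking the infimum over $c\in\R$ yields the left inequality.

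For the upper bound the key step is to show, for any $w\in H^s(\CO)$,
\[
   \|w\|_{L^2(\CO),H^1(\CO),s}^2
   \le 3\,|w|_{L^2(\CO),H^1(\CO),s}^2
   + \frac{1}{s(1-s)}\,\|w\|_{0,\CO}^2 .
\]
To prove this I would split the defining integral at $t=1$. On $(1,\infty)$ use the trivial splitting $w=w+0$, so the integrand is $\le t^{-2s-1}\|w\|_{0,\CO}^2$ and contributes $\|w\|_{0,\CO}^2/(2s)$. On $(0,1)$, for an arbitrary decomposition $w=w_0+w_1$ write $\|w_1\|_{1,\CO}^2=|w_1|_{1,\CO}^2+\|w_1\|_{0,\CO}^2$ and estimate $\|w_1\|_{0,\CO}^2\le 2\|w\|_{0,\CO}^2+2\|w_0\|_{0,\CO}^2$; since $t\le1$ the resulting coefficient $1+2t^2$ of $\|w_0\|_{0,\CO}^2$ is $\le3$, so the cost is bounded by $3(\|w_0\|_{0,\CO}^2+t^2|w_1|_{1,\CO}^2)+2t^2\|w\|_{0,\CO}^2$. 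Taking the infimum over decompositions and integrating, the first part contributes $\le 3\,|w|_{L^2(\CO),H^1(\CO),s}^2$ and the second $2\|w\|_{0,\CO}^2\int_0^1 t^{1-2s}\,dt=\|w\|_{0,\CO}^2/(1-s)$; adding the two ranges and using $1/(1-s)+1/(2s)=(1+s)/(2s(1-s))\le 1/(s(1-s))$ gives the claim. Finally, apply the displayed estimate with $w=v+c$, combine with $|v+c|_{L^2(\CO),H^1(\CO),s}=|v|_{L^2(\CO),H^1(\CO),s}$ and with $\|v+c\|_{s,\CO}\le K\|v+c\|_{L^2(\CO),H^1(\CO),s}$ from Proposition~\ref{prop_norms}, and take the infimum over $c\in\R$ on both sides. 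The main obstacle is obtaining precisely the constant $1/(s(1-s))$ with the correct blow-up at both endpoints $s\to0$ and $s\to1$; this is exactly what the split at $t=1$ achieves, balancing the tails $\int_1^\infty t^{-2s-1}\,dt$ and $\int_0^1 t^{1-2s}\,dt$.
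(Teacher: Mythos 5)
Your proposal is correct and follows essentially the same route as the paper: invariance of the interpolation semi-norm under adding constants plus Proposition~\ref{prop_norms} for the lower bound, and for the upper bound a split of the $K$-functional integral at $t=1$ with the triangle-inequality estimate $\|v_1\|_{0}^2\le 2\|v+c\|_{0}^2+2\|v_0\|_{0}^2$ on $(0,1)$ and the trivial decomposition on $(1,\infty)$, yielding the tail integrals $1/(1-s)$ and $1/(2s)$ whose sum is bounded by $1/(s(1-s))$. The constants come out identically to those in the paper's proof.
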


\begin{proof}
Let $v\in H^s(\CO)$, and let $c_0, c_1$ denote generic constants.
For any $t>0$ there holds
\begin{align*}
   \inf_{v=v_0+v_1} \Bigl(\|v_0\|_{0}^2 + t^2 |v_1|_{1}^2\Bigr)
   &=
   \inf_{v=v_0+c_0+v_1+c_1} \Bigl(\|v_0+c_0\|_{0}^2 + t^2 |v_1|_{1}^2\Bigr)\\
   &=
   \inf_{c_1, v-c_1=v_0+v_1} \Bigl(\|v_0\|_{0}^2 + t^2 |v_1|_{1}^2\Bigr),
\end{align*}
that is
\begin{align*}
   \inf_{v=v_0+v_1} \Bigl(\|v_0\|_{0}^2 + t^2 |v_1|_{1}^2\Bigr)
   &=
   \inf_{c\in\fR} \inf_{v+c=v_0+v_1} \Bigl(\|v_0\|_{0}^2 + t^2 |v_1|_{1}^2\Bigr)\\
   &\le
   \inf_{c\in\fR} \inf_{v+c=v_0+v_1} \Bigl(\|v_0\|_{0}^2 + t^2 \|v_1\|_{1}^2\Bigr).
\end{align*}
(Recall that our convention for the notation
$\inf_{v=v_0+v_1} \Bigl(\|v_0\|_{0}^2 + t^2 |v_1|_{1}^2\Bigr)$
implies that the infimum is taken with respect to $v_0\in L^2(\CO)$ and $v_1\in H^1(\CO)$.)
We conclude that
\begin{align*}
   |v|_{L^2,H^1,s}^2
   &=
   \int_0^\infty t^{-2s}
       \inf_{v=v_0+v_1} \Bigl(\|v_0\|_{0}^2 + t^2\, |v_1|_{1}^2\Bigr)
   \frac{dt}{t}
   \\
   &\le
   \inf_{c\in\fR}
   \int_0^\infty t^{-2s}
       \inf_{v+c=v_0+v_1} \Bigl(\|v_0\|_{0}^2 + t^2\, \|v_1\|_{1}^2\Bigr)
   \frac{dt}{t}
   =
   \inf_{c\in\fR} \|v+c\|_{L^2,H^1,s}^2.
\end{align*}
By Proposition~\ref{prop_norms}
\[
   \inf_{c\in\fR} \|v+c\|_{L^2,H^1,s}^2
   \le
   k^{-2} \inf_{c\in\fR} \|v+c\|_{s}^2
   =
   k^{-2} |v|_{s,\inf}^2,
\]
so that the first assertion follows.

By definition and using Proposition~\ref{prop_norms} there holds
\begin{align} \label{1}
   |v|_{s,\inf}^2
   &=
   \inf_{c\in\fR} \|v+c\|_{s}^2
   \le
   K^2 \inf_{c\in\fR} \|v+c\|_{L^2,H^1,s}^2
   \nonumber\\
   &=
   K^2
   \inf_{c\in\fR}
   \int_0^\infty t^{-2s}
      \inf_{v+c=v_0+v_1} \Bigl(\|v_0\|_{0}^2 + t^2 \|v_1\|_{0}^2 + t^2 |v_1|_{1}^2\Bigr)
   \frac{dt}{t}.
\end{align}
We bound the integrand separately for $t<1$ and $t\ge 1$.

For $t<1$ we use the representation $v+c=v_0+v_1$ to bound
\begin{align*}
   \|v_0\|_{0}^2 + t^2 \|v_1\|_{0}^2 + t^2 |v_1|_{1}^2
   &\le
   \|v_0\|_{0}^2 + 2 t^2\bigl( \|v+c\|_{0}^2 + \|v_0\|_{0}^2\bigr)
                     + t^2 |v_1|_{1}^2
   \\
   &\le
   3 \|v_0\|_{0}^2 + 2 t^2 \|v+c\|_{0}^2 + t^2 |v_1|_{1}^2.
\end{align*}
If $t\ge 1$ then we select $v_0:=v+c$ to conclude that
\[
   \inf_{v+c=v_0+v_1}
   \Bigl(\|v_0\|_{0}^2 + t^2 \|v_1\|_{0}^2 + t^2 |v_1|_{1}^2\Bigr)
   \le
   \|v+c\|_{0}^2.
\]
Together this yields
\begin{align} \label{pf_la2_1}
   &\int_0^\infty t^{-2s}
       \inf_{v+c=v_0+v_1}
       \Bigl(\|v_0\|_{0}^2 + t^2 \|v_1\|_{0}^2 + t^2 |v_1|_{1}^2\Bigr)
    \frac{dt}{t}
   \nonumber\\
   \le
   &\int_0^1 t^{-2s}
       \inf_{v+c=v_0+v_1}
       \Bigl( 3 \|v_0\|_{0}^2 + 2 t^2 \|v+c\|_{0}^2 + t^2 |v_1|_{1}^2 \Bigr)
    \frac{dt}{t}
   +
   \int_1^\infty t^{-2s} \|v+c\|_{0}^2 \frac{dt}{t}
   \nonumber\\
   =
   &\int_0^1
   t^{-2s} \inf_{v+c=v_0+v_1}
   \Bigl( 3 \|v_0\|_{0}^2 + t^2 |v_1|_{1}^2 \Bigr) \frac{dt}{t}
   +
   \|v+c\|_{0}^2
   \Bigl( \int_0^1 2 t^{1-2s} \,dt + \int_1^\infty t^{-1-2s} \,dt \Bigr)
   \nonumber\\
   &\le
   3 |v|_{L^2,H^1,s}^2 + \frac 1{s(1-s)} \|v+c\|_{0}^2.
\end{align}
Therefore, recalling \eqref{1}, we obtain
\[
   |v|_{s,\inf}^2
   \le
   3K^2 |v|_{L^2,H^1,s}^2 + \frac {K^2}{s(1-s)} \inf_{c\in\fR} \|v+c\|_{0}^2,
\]
which is the second assertion.
\end{proof}

From the proof of the previous lemma one can conclude that the semi-norm
$|\cdot|_{L^2(\CO),H^1(\CO),s}$ is indeed the principal part of a norm in
$H^s(\CO)$. This will be useful to deduce a Poincar\'e-Friedrichs inequality
with this semi-norm. First let us specify what we mean by the semi-norm
being principal part of a norm.

\begin{corollary} \label{cor}
Let $\CO\subset\R^n$ be a bounded Lipschitz domain. There holds
\[
   \|v\|_{s,\CO}^2
   \le
   \frac {K^2}{s(1-s)}
   \|v\|_{0,\CO}^2
   +
   3K^2 |v|_{L^2(\CO),H^1(\CO),s}^2
\]
for any $v\in H^s(\CO)$ and $s\in (0,1)$.
Here, $K=K(s,\CO)$ is the number from Proposition~\ref{prop_norms}.
\end{corollary}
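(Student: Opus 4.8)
The plan is to follow almost verbatim the chain of inequalities already assembled in the proof of Lemma~\ref{la2}, but this time applied with $c=0$ rather than with the infimum over $c\in\fR$. The key observation is that the bound \eqref{pf_la2_1} derived in that proof is valid for \emph{any} fixed admissible shift, in particular for the representation $v=v_0+v_1$ without any added constant. Thus the first thing I would do is revisit the estimate \eqref{1}: dropping the infimum over $c$ and setting $c=0$ gives
\[
   \|v\|_{s,\CO}^2
   \le
   K^2 \|v\|_{L^2,H^1,s}^2
   =
   K^2
   \int_0^\infty t^{-2s}
      \inf_{v=v_0+v_1} \Bigl(\|v_0\|_{0}^2 + t^2 \|v_1\|_{0}^2 + t^2 |v_1|_{1}^2\Bigr)
   \frac{dt}{t}.
\]

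Next I would quote the case-split bound on the integrand that was established in the proof of Lemma~\ref{la2}: for $t<1$ one has $\|v_0\|_0^2 + t^2\|v_1\|_0^2 + t^2|v_1|_1^2 \le 3\|v_0\|_0^2 + 2t^2\|v\|_0^2 + t^2|v_1|_1^2$ (here with $c=0$), and for $t\ge 1$ the choice $v_0:=v$, $v_1:=0$ yields the bound $\|v\|_0^2$. Integrating against $t^{-2s-1}$ exactly as in \eqref{pf_la2_1}, the $\int_0^1 2t^{1-2s}\,dt = 1/(1-s)$ and $\int_1^\infty t^{-1-2s}\,dt = 1/(2s)$ combine to the factor $1/(s(1-s))$ (after the crude bound $1/(1-s)+1/(2s)\le 1/(s(1-s))$ used in the lemma), while the $3\|v_0\|_0^2 + t^2|v_1|_1^2$ term integrates to $3|v|_{L^2,H^1,s}^2$. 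This delivers
\[
   \|v\|_{L^2,H^1,s}^2 \le 3|v|_{L^2,H^1,s}^2 + \frac{1}{s(1-s)}\|v\|_{0,\CO}^2,
\]
and multiplying by $K^2$ gives the claimed inequality.

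There is essentially no obstacle here: the corollary is a direct specialization of the computation already carried out, obtained by replacing $v+c$ with $v$ throughout and using the upper norm-equivalence constant $K$ from Proposition~\ref{prop_norms}. The only point that needs a word of care is making explicit that inequality \eqref{pf_la2_1} was proved for a generic representative and therefore applies with the trivial shift; once that is noted the estimate is immediate. I would therefore write the proof as a short paragraph: invoke Proposition~\ref{prop_norms} to pass from $\|v\|_{s,\CO}$ to $K\|v\|_{L^2,H^1,s}$, then reuse the integrand bounds from the proof of Lemma~\ref{la2} with $c=0$ to estimate $\|v\|_{L^2,H^1,s}^2$ by $3|v|_{L^2,H^1,s}^2 + (s(1-s))^{-1}\|v\|_{0,\CO}^2$, and combine.
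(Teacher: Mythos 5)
Your proposal is correct and is exactly the paper's argument: the paper proves the corollary by combining the upper bound of Proposition~\ref{prop_norms} with the estimate \eqref{pf_la2_1} specialized to $c=0$. All the intermediate steps you spell out (the case split at $t=1$, the integrals $1/(1-s)$ and $1/(2s)$ combining into $1/(s(1-s))$) match the computation already carried out in the proof of Lemma~\ref{la2}.
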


\begin{proof}
This is a combination of the second bound from Proposition~\ref{prop_norms}
and \eqref{pf_la2_1} with $c=0$.
\end{proof}

We are now ready to prove a second Poincar\'e-Friedrichs inequality.

\begin{proposition}[Poincar\'e-Friedrichs inequality, interpolation semi-norm]
\label{prop_PF2}
Let $\CO\subset\R^n$ be a bounded connected Lipschitz domain, and $s\in(0,1)$. Then there
exists a constant $\CPFb>0$, depending on $\CO$ and $s$, such that
\[
   \|v\|_{0,\CO} \le \CPFb(s,\CO) \Bigl(|v|_{L^2(\CO), H^1(\CO), s} + |\int_\CO v|\Bigr)
   \qquad\forall v\in H^s(\CO).
\]
\end{proposition}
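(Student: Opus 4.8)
The plan is to argue by contradiction, exactly as in the standard proof of Poincaré–Friedrichs inequalities, using the compactness of the embedding $H^s(\CO)\hookrightarrow L^2(\CO)$. Suppose the claimed inequality fails for every constant; then there exists a sequence $(v_m)\subset H^s(\CO)$ with $\|v_m\|_{0,\CO}=1$ and $|v_m|_{L^2(\CO),H^1(\CO),s} + |\int_\CO v_m| \to 0$. First I would observe that such a sequence is bounded in $H^s(\CO)$: indeed, by Corollary~\ref{cor} we have $\|v_m\|_{s,\CO}^2 \le \frac{K^2}{s(1-s)}\|v_m\|_{0,\CO}^2 + 3K^2|v_m|_{L^2(\CO),H^1(\CO),s}^2$, and the right-hand side stays bounded since $\|v_m\|_{0,\CO}=1$ and the interpolation semi-norm tends to zero. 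This is the key place where the Corollary is used, and it is what makes the indirect argument work with the interpolation semi-norm in place of the Sobolev–Slobodeckij one.

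Next I would extract, using the compact embedding $H^s(\CO)\hookrightarrow L^2(\CO)$ (valid for bounded Lipschitz $\CO$), a subsequence — still denoted $(v_m)$ — converging strongly in $L^2(\CO)$ to some $v\in L^2(\CO)$, and weakly in $H^s(\CO)$. Then $\|v\|_{0,\CO}=\lim\|v_m\|_{0,\CO}=1$, so $v\neq 0$, and $|\int_\CO v| = \lim|\int_\CO v_m| = 0$. The remaining point is to show $|v|_{L^2(\CO),H^1(\CO),s}=0$. By weak lower semicontinuity of the (semi-)norm $|\cdot|_{L^2(\CO),H^1(\CO),s}$ — which holds because it is a genuine seminorm on $H^s(\CO)$, being dominated by $\|\cdot\|_{s,\CO}$ via Proposition~\ref{prop_norms}, hence weakly lsc — one gets $|v|_{L^2(\CO),H^1(\CO),s} \le \liminf_m |v_m|_{L^2(\CO),H^1(\CO),s} = 0$.

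It then remains to identify the functions with vanishing interpolation semi-norm. I would show that $|v|_{L^2(\CO),H^1(\CO),s}=0$ forces $v$ to be (a.e. equal to) a constant. From the definition, vanishing of the integral $\int_0^\infty t^{-2s}\,\inf_{v=v_0+v_1}(\|v_0\|_{0}^2 + t^2|v_1|_{1}^2)\,\frac{dt}{t}$ implies that for a.e.\ $t>0$ the inner infimum is zero, so there are decompositions $v=v_0^{(t)}+v_1^{(t)}$ with $\|v_0^{(t)}\|_0\to 0$ and $t\,|v_1^{(t)}|_1\to 0$ along suitable sequences; letting $t$ shrink, $v_1^{(t)}\to v$ in $L^2$ while $|v_1^{(t)}|_1\to 0$, whence $v\in H^1(\CO)$ with $|v|_{1,\CO}=0$, i.e.\ $v$ is constant on the connected domain $\CO$. (Alternatively, one may invoke that $|v|_{L^2(\CO),H^1(\CO),s}$ is equivalent to $|v|_{s,\CO}$ up to the correction terms in Lemma~\ref{la1} and Lemma~\ref{la2}, and that $|v|_{s,\CO}=0$ already implies $v$ constant since the double integral vanishes only for a.e.-constant $v$.) Combining: $v$ is a nonzero constant with $\int_\CO v = 0$, which is impossible. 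This contradiction proves the proposition. The main obstacle is the weak lower semicontinuity / constancy argument for the interpolation semi-norm; once one accepts (from Proposition~\ref{prop_norms} and the earlier lemmas) that $|\cdot|_{L^2(\CO),H^1(\CO),s}$ behaves like a true seminorm equivalent to $|\cdot|_{s,\CO}$ modulo constants, everything else is the routine compactness machinery.
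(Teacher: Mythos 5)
Your proof is correct and follows essentially the same route as the paper: an indirect compactness argument that uses Corollary~\ref{cor} to get boundedness in $H^s(\CO)$, Rellich's theorem to extract an $L^2$-convergent subsequence, and the vanishing of the interpolation semi-norm of the limit to force a nonzero constant with zero mean. The only (harmless) variation is that the paper concludes the subsequence is Cauchy in $H^s(\CO)$ via Corollary~\ref{cor} and passes to the limit by continuity, whereas you use weak convergence and weak lower semicontinuity of the semi-norm; you also supply the constancy argument that the paper leaves implicit.
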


\begin{proof}
Assume that the inequality is not true. Then there is a sequence
$(v_j)\subset H^s(\CO)$ such that
\[
   \|v_j\|_{0,\CO}=1,\quad |v_j|_{L^2(\CO), H^1(\CO), s} + |\int_\CO v_j|\to 0\ (j\to\infty).
\]
Therefore, by  Corollary~\ref{cor}, $(v_j)$ is bounded in $H^s(\CO)$
with respect to the Sobolev-Slobodeckij norm.
Then, by Rellich's theorem (see \cite[Theorem 3.27]{McLean_00_SES})
there is a convergent subsequence (again denoted by $(v_j)$) in $L^2(\CO)$.
Since $|v_j|_{L^2(\CO), H^1(\CO), s}\to 0$ this sequence is Cauchy and with limit $v$ in $H^s(\CO)$.
It holds $|v|_{L^2(\CO), H^1(\CO), s}=0$ so that $v$ is constant.
Furthermore, since $\int_\CO v=0$ and $\CO$ is connected we conclude that $v=0$,
a contradiction to $\|v_j\|_{0,\CO}=1$.
\end{proof}

With the help of Proposition~\ref{prop_PF2} we can now turn the estimate by
Lemma~\ref{la2} into a semi-norm equivalence.

\begin{lemma} \label{la3}
Let $\CO\subset\R^n$ be a connected bounded Lipschitz domain. There holds
\[
   k^2
   |v|_{L^2(\CO),H^1(\CO),s}^2
   \le
   |v|_{s,\CO,\inf}^2
   \le
   K^2
   \Bigl( 3 + \frac {\CPFb^2}{s(1-s)} \Bigr)
   |v|_{L^2(\CO),H^1(\CO),s}^2
\]
for any $v\in H^s(\CO)$ and $s\in (0,1)$.
Here, $k=k(s,\CO)$, $K=K(s,\CO)$ are the numbers from Proposition~\ref{prop_norms},
and $\CPFb=\CPFb(s,\CO)$ is the number from Proposition~\ref{prop_PF2}.
\end{lemma}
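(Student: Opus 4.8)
The lower bound $k^2 |v|_{L^2,H^1,s}^2 \le |v|_{s,\CO,\inf}^2$ is already the first assertion of Lemma~\ref{la2}, so nothing new is needed there. The work lies entirely in the upper bound, and the idea is to remove the finite-rank term $\inf_{c\in\fR}\|v+c\|_{0,\CO}^2$ from the second assertion of Lemma~\ref{la2} by absorbing it into the interpolation semi-norm via the Poincar\'e–Friedrichs inequality of Proposition~\ref{prop_PF2}.

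Concretely, I would start from
\[
   |v|_{s,\CO,\inf}^2
   \le
   3K^2 |v|_{L^2,H^1,s}^2
   +
   \frac{K^2}{s(1-s)} \inf_{c\in\fR} \|v+c\|_{0,\CO}^2
\]
(Lemma~\ref{la2}). The point is that for the particular constant $c^\star$ realizing (or approaching) the infimum $\inf_{c\in\fR}\|v+c\|_{0,\CO}$, the translated function $w:=v+c^\star$ satisfies $\int_\CO w = 0$ up to an arbitrarily small error — or, more cleanly, one takes $c^\star = -|\CO|^{-1}\int_\CO v$ so that $\int_\CO w = 0$ exactly, at the cost of replacing the infimum by a specific (possibly larger) value; but since translation does not change $|v|_{L^2,H^1,s}$, this is harmless. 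Then Proposition~\ref{prop_PF2} applied to $w$ gives $\|w\|_{0,\CO} \le \CPFb\,(|w|_{L^2,H^1,s} + |\int_\CO w|) = \CPFb\,|v|_{L^2,H^1,s}$, because $|w|_{L^2,H^1,s}=|v|_{L^2,H^1,s}$ (the interpolation semi-norm annihilates constants, just as the Sobolev–Slobodeckij semi-norm does — this follows from the characterization of the infimum in the K-functional being invariant under the shift argument used in the proof of Lemma~\ref{la2}). Hence $\inf_{c\in\fR}\|v+c\|_{0,\CO}^2 \le \|w\|_{0,\CO}^2 \le \CPFb^2 |v|_{L^2,H^1,s}^2$.

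Substituting this bound back yields
\[
   |v|_{s,\CO,\inf}^2
   \le
   3K^2 |v|_{L^2,H^1,s}^2
   +
   \frac{K^2\,\CPFb^2}{s(1-s)} |v|_{L^2,H^1,s}^2
   =
   K^2 \Bigl(3 + \frac{\CPFb^2}{s(1-s)}\Bigr) |v|_{L^2,H^1,s}^2,
\]
which is exactly the claimed upper bound. Combining with the lower bound from Lemma~\ref{la2} finishes the proof.

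\textbf{Main obstacle.} The only subtle point I anticipate is justifying that the interpolation semi-norm $|\cdot|_{L^2,H^1,s}$ is invariant under adding a constant — i.e.\ that $|v+c|_{L^2,H^1,s} = |v|_{L^2,H^1,s}$ for all $c\in\fR$. This is needed so that applying Proposition~\ref{prop_PF2} to the shifted function $w=v+c^\star$ reproduces $|v|_{L^2,H^1,s}$ rather than some larger quantity. This invariance is implicit in the shift manipulation already carried out at the start of the proof of Lemma~\ref{la2} (the $K$-functional with the $H^1$-\emph{semi}-norm is unchanged when one translates $v$ by a constant, since the constant can be dumped into $v_0$ and recovered); I would simply invoke that computation. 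Everything else is substitution, so there is no real calculation to grind through.
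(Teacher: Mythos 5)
Your proof is correct and follows essentially the same route as the paper: the lower bound is quoted from Lemma~\ref{la2}, and the upper bound is obtained by taking the mean-zero shift $c^\star=-|\CO|^{-1}\int_\CO v$ (which simultaneously realizes $\inf_{c\in\fR}\|v+c\|_{0,\CO}$ and kills the integral term in Proposition~\ref{prop_PF2}) together with the translation invariance of $|\cdot|_{L^2(\CO),H^1(\CO),s}$. One small imprecision: that invariance holds because a constant can be absorbed into the $H^1$-component $v_1$, whose \emph{semi}-norm annihilates it — not into $v_0$, as your parenthetical suggests — but the computation you cite from the start of the proof of Lemma~\ref{la2} already supplies the correct argument.
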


\begin{proof}
The lower bound is the one from Lemma~\ref{la2}. The upper bound is a combination
of the upper bound from the same lemma and the Poincar\'e-Friedrichs' inequality from
Proposition~\ref{prop_PF2}. To this end note that the infimum
$\inf_{c\in\fR}\|v+c\|_{0,\CO}$ is achieved by the same constant $c$ that
eliminates the integral in the bound of the Poincar\'e-Friedrichs' inequality for $v+c$.
\end{proof}

Meanwhile we have accumulated quite some parameters in the semi-norm estimates
that depend on the order $s$ and the domain $\CO$ under consideration.
Our goal is to show equivalence of semi-norms which is uniform for a family of
affinely transformed domains. We therefore study transformation properties of
semi-norms in the following section. In this way, parameters from this section enter
final results only via their values on a reference domain.

\subsection{Transformation properties of norms and semi-norms} \label{sec_scaling}

Obviously, both norms in $H^s(\CO)$ defined previously,
$\|\cdot\|_{L^2(\CO),H^1(\CO),s}$ and $\|\cdot\|_{s,\CO}$, are not scalable.
This could be achieved by weighting the $L^2(\CO)$-contributions according
to the diameter of $\CO$, for instance, cf. \cite{ErvinH_06_ABS}.
Of course, in this way one does not obtain uniformly equivalent norms
(of un-weighted and weighted variants) under transformation of the domain.

This is different for the norm in $\tilde H^s(\CO)$. It can be easily
fixed (to be scalable) by using that the semi-norm $|\cdot|_{1,\CO}$ is
a norm in $H^1_0(\CO)$, and re-defining
\[
   \|v\|_{[L^2(\CO),H^1_0(\CO)]_s}
   :=
   \|v\|_{L^2(\CO),H^1_0(\CO),s}
   :=
   \left( \int_0^\infty
          t^{-2s} \inf_{v=v_0+v_1, v_1\in H^1_0(\CO)}
                  \Bigl(\|v_0\|_{0,\CO}^2 + t^2\, |v_1|_{1,\CO}^2\Bigr)
          \frac{dt}{t}
   \right)^{1/2}
\]
in the case of interpolation. In the case of the Sobolev-Slobodeckij norm
one can ensure scalability by re-defining
\[
  \|v\|_{\tilde H^s(\CO)}
  :=
  \|v\|_{\sim,s,\CO}
  :=
  \left(
  |v|_{H^s(\CO)}^2
  +
  \|\frac{v(x)}{\dist(x,\partial \CO)^s}\|_{L^2(\CO)}^2
  \right)^{1/2}
\]
since the last term guarantees positivity. In the following we will make
use of these re-defined norms.

For a domain $\hat\CO\in\R^n$ we denote by $\CO=F(\hat\CO)$ the affinely transformed domain
\be \label{trafo}
   \CO:=\{F\hat x;\, \hat x\in\hat\CO\}\quad\text{with}\quad F\hat x=x_0+B\hat x,\
   x_0\in\R^n,\ B\in\R^{n\times n}.
\ee
Here, $B$ is assumed to be invertible.
Correspondingly, for a given real function $v$ defined on $\CO$,
\[
   \hat v:\left\{\begin{array}{lll}
            \hat\CO & \to & \R\\
            \hat x & \mapsto & v(F\hat x)
       \end{array}\right.
\]
is the function transformed onto $\hat\CO$.

\begin{lemma}[transformation properties of norms] \label{la_scale}
Let $\hat\CO\subset\R^n$ be a bounded Lipschitz domain and let $\CO$ be the affinely
transformed domain defined by \eqref{trafo}. Then there hold the transformation properties
\be \label{scale_int}
   |\det B|\,\|B\|^{-2s}\; \|\hat v\|_{L^2(\hat \CO),H^1_0(\hat \CO),s}^2
   \le
   \|v\|_{L^2(\CO),H^1_0(\CO),s}^2
   \le
   |\det B|\,\|B^{-1}\|^{2s}\; \|\hat v\|_{L^2(\hat \CO),H^1_0(\hat \CO),s}^2,
\ee
\be \label{scale_tilde}
\begin{split}
   |\det B|\,\|B\|^{-2s}\min\{|\det B|\,\|B\|^{-n},1\}\; \|\hat v\|_{\sim,s,\hat\CO}^2
   \le
   \|v\|_{\sim,s,\CO}^2
\qquad\qquad\qquad\qquad\qquad
\\
\qquad\qquad\qquad\qquad\qquad
   \le
   |\det B|\,\|B^{-1}\|^{2s}\max\{|\det B|\,\|B^{-1}\|^n,1\}\; \|\hat v\|_{\sim,s,\hat\CO}^2
\end{split}
\ee
for any $\hat v\in \tilde H^s(\hat\CO)$ and $s\in (0,1)$.
\end{lemma}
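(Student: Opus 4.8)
The plan is to prove the two chains of inequalities by tracking how each ingredient of the respective norm transforms under the affine map $F$, and then combining the pieces. The starting point is the elementary scaling of the $L^2$- and $H^1_0$-quantities: by the change of variables $x=F\hat x$, one has $\|v\|_{0,\CO}^2=|\det B|\,\|\hat v\|_{0,\hat\CO}^2$ exactly, while for the gradient one gets $\nabla\hat v(\hat x)=B^\top\nabla v(F\hat x)$, so that $\|B^{-1}\|^{-2}|\hat v|_{1,\hat\CO}^2\le|\det B|^{-1}|v|_{1,\CO}^2\le\|B\|^{2}|\hat v|_{1,\hat\CO}^2$. I will state these two facts first (they are standard; the $H^1$-seminorm one follows from $|\nabla v|=|B^{-\top}\nabla\hat v|$ and the operator-norm bounds on $B^{-\top}$ and its inverse).

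For the interpolation seminorm \eqref{scale_int} the key step is to transport the $K$-functional. Fix $t>0$ and an arbitrary decomposition $v=v_0+v_1$ with $v_1\in H^1_0(\CO)$; then $\hat v=\hat v_0+\hat v_1$ with $\hat v_1\in H^1_0(\hat\CO)$, and conversely every admissible decomposition on $\hat\CO$ arises this way. Using the two scaling identities above,
\[
   \|v_0\|_{0,\CO}^2+t^2|v_1|_{1,\CO}^2
   \ge
   |\det B|\Bigl(\|\hat v_0\|_{0,\hat\CO}^2+t^2\|B\|^{-2}|\hat v_1|_{1,\hat\CO}^2\Bigr),
\]
so, substituting $\tau:=t\|B\|^{-1}$ and using $d\tau/\tau=dt/t$ together with $t^{-2s}=\|B\|^{-2s}\tau^{-2s}$, the integral defining $\|v\|_{L^2(\CO),H^1_0(\CO),s}^2$ is bounded below by $|\det B|\,\|B\|^{-2s}\,\|\hat v\|_{L^2(\hat\CO),H^1_0(\hat\CO),s}^2$, which is the left inequality. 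The right inequality is obtained in exactly the same way, now using $|v_1|_{1,\CO}^2\le|\det B|\,\|B^{-1}\|^{2}|\hat v_1|_{1,\hat\CO}^2$ and the substitution $\tau:=t\|B^{-1}\|$. (The direction of the substitution and which of $\|B\|$, $\|B^{-1}\|$ appears is the only place one has to be careful.)

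For \eqref{scale_tilde} I will decompose $\|v\|_{\sim,s,\CO}^2=|v|_{s,\CO}^2+\|v(x)/\dist(x,\partial\CO)^s\|_{0,\CO}^2$ and scale the two terms separately. For the double-integral seminorm $|v|_{s,\CO}^2$, substituting $x=F\hat x$, $y=F\hat y$ gives $dx\,dy=|\det B|^2\,d\hat x\,d\hat y$, $v(x)-v(y)=\hat v(\hat x)-\hat v(\hat y)$, and $|B\hat x-B\hat y|$ ranges between $\|B^{-1}\|^{-1}|\hat x-\hat y|$ and $\|B\|\,|\hat x-\hat y|$; raising to the power $n+2s$ yields the two-sided bound $|\det B|^2\|B\|^{-(n+2s)}|\hat v|_{s,\hat\CO}^2\le|v|_{s,\CO}^2\le|\det B|^2\|B^{-1}\|^{n+2s}|\hat v|_{s,\hat\CO}^2$. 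For the weighted term one uses that $\dist(F\hat x,\partial\CO)$ lies between $\|B^{-1}\|^{-1}\dist(\hat x,\partial\hat\CO)$ and $\|B\|\,\dist(\hat x,\partial\hat\CO)$ (since $F$ maps $\partial\hat\CO$ onto $\partial\CO$ and is bi-Lipschitz with the stated constants), together with $dx=|\det B|\,d\hat x$, giving $|\det B|\,\|B\|^{-2s}\,W(\hat v)\le W(v)\le|\det B|\,\|B^{-1}\|^{2s}\,W(v)$ where $W$ denotes the weighted $L^2$-term. Writing $|\det B|^2=|\det B|\cdot|\det B|$ in the seminorm bound, the common factor across both terms is $|\det B|$, and one pulls out $|\det B|\,\|B\|^{-2s}$ from below and $|\det B|\,\|B^{-1}\|^{2s}$ from above; the residual discrepancy between the two terms is exactly the factor $|\det B|\,\|B\|^{-n}$ (resp.\ $|\det B|\,\|B^{-1}\|^{n}$), and taking $\min\{|\det B|\,\|B\|^{-n},1\}$ for the lower bound and $\max\{|\det B|\,\|B^{-1}\|^{n},1\}$ for the upper bound yields \eqref{scale_tilde}.

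The main obstacle is not any single estimate — all are routine changes of variables — but bookkeeping: keeping straight which of $\|B\|$ and $\|B^{-1}\|$ controls a lower versus an upper bound, correctly handling the exponent $n+2s$ versus $2s$ in the two summands of the $\sim$-norm so that the $\min$/$\max$ factors come out as stated, and verifying the bi-Lipschitz bound for $\dist(\cdot,\partial\CO)$ under $F$. I would treat the interpolation case in full first and then remark that the $\sim$-norm case combines the analogous scaling of $|\cdot|_{s}^2$ (a plain double-integral substitution, no infimum to juggle) with the scaling of the distance-weighted term, so that the argument is in fact more direct than the interpolation one.
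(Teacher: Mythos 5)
Your proof follows essentially the same route as the paper's: the elementary $L^2$/$H^1$ scaling combined with a substitution $\tau=t\|B\|^{\pm1}$ in the $K$-functional integral for \eqref{scale_int}, and a direct change of variables in the double integral and in the distance-weighted term (using the bi-Lipschitz bound on $\dist(\cdot,\partial\CO)$) for \eqref{scale_tilde}, with the $\min/\max$ factors arising exactly as you describe. One slip to fix: your preliminary display for the $H^1$-seminorm has the constants interchanged --- the correct two-sided bound is
$\|B\|^{-2}\,|\hat v|_{1,\hat\CO}^2 \le |\det B|^{-1}\,|v|_{1,\CO}^2 \le \|B^{-1}\|^{2}\,|\hat v|_{1,\hat\CO}^2$
(for instance $B=\diag(2,1)$ and $v(x)=x_1$ violates the lower bound as you stated it); fortunately the inequalities you actually invoke in the subsequent steps are the correct ones, so nothing downstream is affected.
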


\begin{proof}
For the interpolation norm and $\hat\CO$, $\CO$ being a cubes, this property (with an unspecified
equivalence constant) has been shown in \cite{Heuer_01_ApS}. It is simply the scaling
properties of the $L^2$ and $H^1_0$-norms together with the exactness of the K-method
of interpolation (employed here).
The proof generalizes to affine mappings in a straightforward way as follows.
In Euclidean norm one has $\|\nabla v(x)\| \le \|B^{-1}\|\,\|\nabla \hat v(\hat x)\|$ so
that the following relations are immediate,
\[
   \|v\|_{L^2(\CO)}^2 = |\det B|\,\|\hat v\|_{L^2(\hat\CO)}^2,\quad
   |v|_{H^1(\CO)}^2 \le |\det B|\, \|B^{-1}\|^2 |\hat v|_{H^1(\hat\CO)}^2.
\]
Then, with transformation $r=\|B^{-1}\|\,t$, we deduce that
\begin{align*}
   \|v\|_{L^2(\CO),H^1_0(\CO),s}^2
   &=
   \int_0^\infty t^{-2s} \inf_{v=v_0+v_1, v_1\in H^1_0(\CO)}
          \Bigl(\|v_0\|_{0,\CO}^2 + t^2\, |v_1|_{1,\CO}^2\Bigr)
          \frac{dt}{t}
   \\
   &\le
   |\det B|
   \int_0^\infty t^{-2s} \inf_{\hat v=\hat v_0+\hat v_1, \hat v_1\in H^1_0(\hat \CO)}
          \Bigl(\|\hat v_0\|_{0,\hat \CO}^2 + t^2\, \|B^{-1}\|^2|\hat v_1|_{1,\hat \CO}^2\Bigr)
          \frac{dt}{t}
   \\
   &=
   |\det B|
   \int_0^\infty (\|B^{-1}\|^{-1}\,r)^{-2s} \inf_{\hat v=\hat v_0+\hat v_1, \hat v_1\in H^1_0(\hat \CO)}
          \Bigl(\|\hat v_0\|_{0,\hat \CO}^2 + r^2\, |\hat v_1|_{1,\hat \CO}^2\Bigr)
          \frac{dr}{r}
   \\
   &=
   |\det B|\,\|B^{-1}\|^{2s} \|\hat v\|_{L^2(\hat \CO),H^1_0(\hat \CO),s}^2.
\end{align*}
This proves the upper bound in \eqref{scale_int}. The lower bound is verified by using
the inverse transformation $F^{-1}$ with matrix $B^{-1}$.

The transformation property of the second norm is obtained similarly, see also
\cite[page 461]{DupontS_80_PAF} for the term of the double integral.
\begin{align*}
\lefteqn{
   \|v\|_{\sim,s,\CO}^2
   =
   \int_{\CO} \int_{\CO} \frac{|v(x)-v(y)|^2}{|x-y|^{n+2s}}\,dx\,dy
   +
   \int_{\CO} \Bigl(\frac{v(x)}{\dist(x,\partial \CO)^s}\Bigr)^2 \,dx
}
   \\
   &\le
   |\det B|^2
   \int_{\hat\CO} \int_{\hat\CO}
   \frac{|\hat v(\hat x)-\hat v(\hat y)|^2}{\|B^{-1}\|^{-n-2s}|\hat x-\hat y|^{n+2s}}
   \,d\hat x\,d\hat y
   +
   |\det B|
   \int_{\hat\CO} \Bigl(\frac{\hat v(\hat x)}
                             {\|B^{-1}\|^{-s}\dist(\hat x,\partial \hat\CO)^s}
                  \Bigr)^2 \,d\hat x
   \\
   &\le
   |\det B|\,\|B^{-1}\|^{2s}\max\{|\det B|\,\|B^{-1}\|^n,1\} \|\hat v\|_{\sim,s,\hat\CO}^2.
\end{align*}
This is the upper bound in \eqref{scale_tilde}. Analogously one finds that
\[
   \|\hat v\|_{\sim,s,\hat\CO}^2
   \le
   |\det B^{-1}|\,\|B\|^{2s}\max\{|\det B^{-1}|\,\|B\|^n,1\} \|v\|_{\sim,s,\CO}^2.
\]
This proves the lower bound in \eqref{scale_tilde}.
\end{proof}

\begin{lemma}[transformation properties of semi-norms] \label{la_scale_semi}
Let $\hat\CO\subset\R^n$ be a bounded Lipschitz domain and let $\CO$ be the affinely
transformed domain defined by \eqref{trafo}. Then there hold the transformation properties
\be \label{scale_semi_int}
   |\det B|\,\|B\|^{-2s}\; |\hat v|_{L^2(\hat \CO),H^1(\hat \CO),s}^2
   \le
   |v|_{L^2(\CO),H^1(\CO),s}^2
   \le
   |\det B|\,\|B^{-1}\|^{2s}\; |\hat v|_{L^2(\hat \CO),H^1(\hat \CO),s}^2,
\ee
\be \label{scale_semi_tilde}
   |\det B|^2\,\|B\|^{-n-2s}\; |\hat v|_{s,\hat\CO}^2
   \le
   |v|_{s,\CO}^2
   \le
   |\det B|^2\,\|B^{-1}\|^{n+2s}\; |\hat v|_{s,\hat\CO}^2
\ee
for any $\hat v\in H^s(\hat\CO)$ and $s\in (0,1)$.
\end{lemma}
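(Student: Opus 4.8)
The plan is to prove \eqref{scale_semi_int} and \eqref{scale_semi_tilde} in exactly the same style as Lemma~\ref{la_scale}, but now working with the semi-norms, so that the $L^2$-contributions (which were the source of the extra $\min/\max$ factors in \eqref{scale_tilde}) are absent. First I would record the elementary transformation facts under the change of variables $x=F\hat x$: since $dx = |\det B|\,d\hat x$ and $x-y = B(\hat x-\hat y)$, one has $|x-y| \le \|B\|\,|\hat x-\hat y|$ and $|x-y|\ge \|B^{-1}\|^{-1}|\hat x-\hat y|$ in Euclidean norm, while $|v(x)-v(y)| = |\hat v(\hat x)-\hat v(\hat y)|$ pointwise. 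For the $H^1$-semi-norm I would use $\|\nabla v(x)\|\le \|B^{-1}\|\,\|\nabla\hat v(\hat x)\|$ (and the reverse with roles of $B$, $B^{-1}$ swapped), giving $|v|_{1,\CO}^2 \le |\det B|\,\|B^{-1}\|^2\,|\hat v|_{1,\hat\CO}^2$ and the analogous lower estimate. None of this uses $L^2$-norms, so no comparison constants appear.

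Next I would handle \eqref{scale_semi_tilde} directly. Substituting into the double integral defining $|v|_{s,\CO}^2$, the numerator is invariant, each of the two integrals contributes a factor $|\det B|$, and in the denominator $|x-y|^{n+2s} \ge \|B^{-1}\|^{-(n+2s)}|\hat x-\hat y|^{n+2s}$ gives the upper bound
\[
   |v|_{s,\CO}^2 \le |\det B|^2\,\|B^{-1}\|^{n+2s}\,|\hat v|_{s,\hat\CO}^2 .
\]
The lower bound follows by the same computation applied to the inverse map $F^{-1}$ (with matrix $B^{-1}$, so $|\det B^{-1}|=|\det B|^{-1}$), which yields $|\hat v|_{s,\hat\CO}^2 \le |\det B|^{-2}\,\|B\|^{n+2s}\,|v|_{s,\CO}^2$, i.e. the claimed lower estimate in \eqref{scale_semi_tilde}.

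For \eqref{scale_semi_int} I would mimic the first half of the proof of Lemma~\ref{la_scale}, but using $|v_1|_{1,\CO}$ rather than $\|v_1\|_{1,\CO}$ and, crucially, using the genuine interpolation semi-norm $|\cdot|_{L^2,H^1,s}$ on both sides (the infimum over splittings $v=v_0+v_1$ with $v_0\in L^2$, $v_1\in H^1$, which transforms to the corresponding infimum for $\hat v$). Writing $\|v_0\|_{0,\CO}^2 = |\det B|\,\|\hat v_0\|_{0,\hat\CO}^2$ and $|v_1|_{1,\CO}^2 \le |\det B|\,\|B^{-1}\|^2\,|\hat v_1|_{1,\hat\CO}^2$, and then performing the substitution $r = \|B^{-1}\|\,t$ in the outer integral exactly as in Lemma~\ref{la_scale} (which absorbs the $\|B^{-1}\|^2$ into a factor $\|B^{-1}\|^{2s}$ after accounting for the $t^{-2s}\,dt/t$ measure), I obtain the upper bound
\[
   |v|_{L^2(\CO),H^1(\CO),s}^2 \le |\det B|\,\|B^{-1}\|^{2s}\,|\hat v|_{L^2(\hat\CO),H^1(\hat\CO),s}^2 .
\]
The lower bound again comes from applying the same argument to $F^{-1}$.

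I do not expect any real obstacle here; the proof is essentially a transcription of Lemma~\ref{la_scale} with the $\|\cdot\|_{0}$-terms in the $H^1$-pieces deleted, so the only point deserving care is bookkeeping of the exponent of $\|B^{-1}\|$ after the substitution $r=\|B^{-1}\|\,t$ in \eqref{scale_semi_int}, and the exponent $n+2s$ (rather than $2s$) in \eqref{scale_semi_tilde} coming from the denominator of the double integral together with the extra factor $|\det B|$ from the second integration. I would simply state that the argument parallels that of Lemma~\ref{la_scale}, carry out the double-integral substitution for \eqref{scale_semi_tilde} explicitly, and indicate that \eqref{scale_semi_int} follows by the identical manipulation used there.
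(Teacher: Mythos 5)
Your proposal is correct and is exactly the argument the paper intends: its proof of this lemma consists of the single remark that it is ``basically identical'' to that of Lemma~\ref{la_scale}, and you have carried out precisely that adaptation (change of variables in the double integral with $|x-y|\ge\|B^{-1}\|^{-1}|\hat x-\hat y|$, and the K-functional substitution $r=\|B^{-1}\|\,t$ with the $L^2$-term of the $H^1$-piece dropped). The exponents $|\det B|^2\,\|B^{-1}\|^{n+2s}$ and $|\det B|\,\|B^{-1}\|^{2s}$, and the lower bounds via $F^{-1}$, all check out.
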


\begin{proof}
The proof is basically identical to the one of Lemma~\ref{la_scale}.
\end{proof}

The third semi-norm, $|\cdot|_{s,\CO,\inf}$, behaves under affine transformations
as follows.

\begin{lemma} \label{la_scale_semi3}
Let $\hat\CO\subset\R^n$ be a bounded Lipschitz domain and let $\CO$ be the affinely
transformed domain defined by \eqref{trafo}. Then there hold the transformation properties
\[
\begin{split}
   |\det B|^2\,\|B\|^{-n-2s}\; |\hat v|_{s,\hat\CO}^2
   +
   |\det B|\, \inf_{c\in\fR} \|\hat v+c\|_{0,\hat\CO}^2
   \le
   |v|_{s,\CO,\inf}^2
\qquad\qquad\qquad\qquad\qquad
\\
\qquad\qquad\qquad\qquad\qquad
   \le
   |\det B|^2\,\|B^{-1}\|^{n+2s}\; |\hat v|_{s,\hat\CO}^2
   +
   |\det B|\, \inf_{c\in\fR} \|\hat v+c\|_{0,\hat\CO}^2
\end{split}
\]
for any $\hat v\in H^s(\hat\CO)$ and $s\in (0,1)$.
\end{lemma}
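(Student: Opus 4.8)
The plan is to reduce this to the definition of $|\cdot|_{s,\CO,\inf}$ together with the already-established scaling of the Sobolev-Slobodeckij semi-norm (Lemma~\ref{la_scale_semi}) and the elementary scaling of the $L^2$-norm. Recall from Lemma~\ref{la1} that $|v|_{s,\CO,\inf}^2 = |v|_{s,\CO}^2 + \inf_{c\in\fR} \|v+c\|_{0,\CO}^2$, and that the same identity holds on $\hat\CO$. So the statement to be proved simply says that each of the two summands transforms with its own (known) factor. First I would invoke this identity on both $\CO$ and $\hat\CO$ to write $|v|_{s,\CO,\inf}^2 = |v|_{s,\CO}^2 + \inf_{c\in\fR}\|v+c\|_{0,\CO}^2$.

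Next I would treat the two terms separately. For the double-integral term, \eqref{scale_semi_tilde} of Lemma~\ref{la_scale_semi} gives directly
\[
   |\det B|^2\,\|B\|^{-n-2s}\,|\hat v|_{s,\hat\CO}^2
   \le |v|_{s,\CO}^2
   \le |\det B|^2\,\|B^{-1}\|^{n+2s}\,|\hat v|_{s,\hat\CO}^2.
\]
For the $L^2$ term, I would use that the affine change of variables $x=F\hat x$ has constant Jacobian $|\det B|$, so that $\|w\|_{0,\CO}^2 = |\det B|\,\|\hat w\|_{0,\hat\CO}^2$ for any $w\in L^2(\CO)$; applying this to $w=v+c$ and noting that $\widehat{v+c}=\hat v+c$ for a constant $c$, and that taking the infimum over $c\in\fR$ commutes with multiplication by the positive constant $|\det B|$, yields
\[
   \inf_{c\in\fR}\|v+c\|_{0,\CO}^2 = |\det B|\,\inf_{c\in\fR}\|\hat v+c\|_{0,\hat\CO}^2.
\]
Adding these two relations (an inequality for the first, an equality for the second) gives both the lower and the upper bound claimed in the lemma.

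There is essentially no obstacle here: the only points requiring a line of care are that the constant $c$ is unaffected by the affine map (so $\widehat{v+c}=\hat v+c$) and that pulling the positive scalar $|\det B|$ outside the infimum over $c$ is legitimate — both are trivial. The proof is therefore short, and I would simply write: apply Lemma~\ref{la1} on $\CO$ and $\hat\CO$, combine Lemma~\ref{la_scale_semi} for the seminorm part with the exact $L^2$ scaling for the constant-adjusted part, and add. One may remark that, unlike in Lemma~\ref{la_scale}, the $L^2$-type term here is genuinely minimized over constants, so no Poincar\'e--Friedrichs argument is needed to control it — the equality is exact.
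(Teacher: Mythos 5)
Your proposal is correct and follows exactly the paper's own argument: the paper likewise combines the representation $|v|_{s,\CO,\inf}^2 = |v|_{s,\CO}^2 + \inf_{c\in\fR}\|v+c\|_{0,\CO}^2$ from Lemma~\ref{la1} with the transformation property \eqref{scale_semi_tilde} of Lemma~\ref{la_scale_semi} and the exact $L^2$ scaling. Your write-up merely spells out the details the paper leaves as ``immediate.''
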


\begin{proof}
This result is immediate from the representation of the semi-norm
given in Lemma~\ref{la1} and the transformation properties of the
$|\cdot|_{s}$-semi-norm by Lemma~\ref{la_scale_semi} and of the $L^2$-norm.
\end{proof}

\section{Main results} \label{sec_main}

We are now ready to state and prove our main results on certain equivalences
of fractional-order Sobolev semi-norms.
We use the notation \eqref{trafo} from Section~\ref{sec_scaling} for
affine transformations. In particular, we assume that the domain $\CO$ under
consideration is the affine image of a bounded Lipschitz domain $\hat\CO$.
The following results specify how equivalence constants depend on the affine map.
At the end of this section we conclude the equivalence of some semi-norms which
is uniform for a family of so-called shape regular domains (Theorem~\ref{thm_main})
and some scaling properties (Corollary~\ref{cor_scale}).
These results are of importance for the approximation
theory of piecewise polynomial spaces in fractional-order Sobolev spaces.

The first theorem shows the equivalence of
the semi-norms $|\cdot|_{L^2(\CO),H^1(\CO),s}$ and $|\cdot|_{s,\CO}$.

\begin{theorem} \label{thm1}
Let $\hat\CO\subset\R^n$ be a bounded, connected Lipschitz domain and let $\CO$
be the affinely transformed domain defined by \eqref{trafo}.
Then there hold the following relations.
\begin{enumerate}
\item[(i)]
\[
   |v|_{s,\CO}^2
   \le
   |\det B|\,\|B^{-1}\|^{n+2s} \|B\|^{2s}
   K(s,\hat\CO)^2 \Bigl( 3 + \frac {\CPFb(s,\hat\CO)^2}{s(1-s)} \Bigr)
   |v|_{L^2(\CO),H^1(\CO),s}^2
\]
for any $v\in H^s(\CO)$ and $s\in (0,1)$
with $K(s,\hat\CO)$ from Proposition~\ref{prop_norms} and
     $\CPFb(s,\hat\CO)$ from Proposition~\ref{prop_PF2}.
\item[(ii)]
\[
   |v|_{L^2(\CO),H^1(\CO),s}^2
   \le
   |\det B|^{-1} \|B\|^{n+2s} \|B^{-1}\|^{2s}
   k(s,\hat\CO)^{-2} \Bigl( 1 + \CPFa(s,\hat\CO)^2 \Bigr)
   |v|_{s,\CO}^2
\]
for any $v\in H^s(\CO)$ and $s\in (0,1)$
with $k(s,\hat\CO)$ from Proposition~\ref{prop_norms} and
     $\CPFa(s,\hat\CO)$ from Proposition~\ref{prop_PF1}.
\end{enumerate}
\end{theorem}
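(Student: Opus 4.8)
The plan is to chain together the local (fixed-domain) equivalences from Section~\ref{sec_S3} with the affine transformation properties from Section~\ref{sec_scaling}, using the quotient-type semi-norm $|\cdot|_{s,\CO,\inf}$ as the common bridge. The key observation is that all three semi-norms of interest are, up to explicit constants, comparable to $|\cdot|_{s,\CO,\inf}$ on a \emph{fixed} domain (Lemmas~\ref{la1} and~\ref{la3}), and that $|\cdot|_{s,\CO,\inf}$ has a clean transformation law (Lemma~\ref{la_scale_semi3}).

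For part~(i), I would start on $\CO$ with $|v|_{s,\CO}^2 \le |v|_{s,\CO,\inf}^2$ (the trivial lower bound in Lemma~\ref{la1}), then transport $|v|_{s,\CO,\inf}^2$ back to $\hat\CO$ via the \emph{upper} bound of Lemma~\ref{la_scale_semi3}, which gives $|v|_{s,\CO,\inf}^2 \le |\det B|^2\|B^{-1}\|^{n+2s}|\hat v|_{s,\hat\CO}^2 + |\det B|\inf_c\|\hat v+c\|_{0,\hat\CO}^2$. The right-hand side is exactly $|\det B|\,\|B^{-1}\|^{n+2s}$ times a combination dominated by $|\hat v|_{s,\hat\CO,\inf}^2$ (again Lemma~\ref{la1}, noting $|\det B|^2\|B^{-1}\|^{n+2s} \ge |\det B|$ need not hold in general, so one should instead directly recognize $|\det B|^2\|B^{-1}\|^{n+2s}|\hat v|_{s,\hat\CO}^2 + |\det B|\inf_c\|\hat v+c\|_{0,\hat\CO}^2$ as bounded by $|\det B|^2\|B^{-1}\|^{n+2s}(|\hat v|_{s,\hat\CO}^2 + |\det B|^{-1}\|B\|^{n+2s}\inf_c\|\hat v+c\|_{0,\hat\CO}^2)$ — but cleaner is to just invoke Lemma~\ref{la_scale_semi3}'s structure and then apply Lemma~\ref{la3} on $\hat\CO$). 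Then apply the upper bound in Lemma~\ref{la3} on $\hat\CO$ to get $|\hat v|_{s,\hat\CO,\inf}^2 \le K(s,\hat\CO)^2(3 + \CPFb(s,\hat\CO)^2/(s(1-s)))\,|\hat v|_{L^2(\hat\CO),H^1(\hat\CO),s}^2$, and finally transform the interpolation semi-norm forward to $\CO$ using the \emph{lower} bound of \eqref{scale_semi_int}, i.e.\ $|\hat v|_{L^2(\hat\CO),H^1(\hat\CO),s}^2 \le |\det B|^{-1}\|B\|^{2s}|v|_{L^2(\CO),H^1(\CO),s}^2$. Collecting the powers of $|\det B|$, $\|B^{-1}\|$, $\|B\|$ yields exactly the stated constant $|\det B|\,\|B^{-1}\|^{n+2s}\|B\|^{2s}$.

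Part~(ii) runs the same circuit in the opposite direction: start from $|v|_{L^2(\CO),H^1(\CO),s}^2$, pull it back to $\hat\CO$ with the lower bound of \eqref{scale_semi_int} rearranged, bound it on $\hat\CO$ by $k(s,\hat\CO)^{-2}|\hat v|_{s,\hat\CO,\inf}^2$ via the lower bound of Lemma~\ref{la3} (equivalently Lemma~\ref{la2}), then use Lemma~\ref{la1} on $\hat\CO$ to write $|\hat v|_{s,\hat\CO,\inf}^2 \le (1+\CPFa(s,\hat\CO)^2)|\hat v|_{s,\hat\CO}^2$, and finally push $|\hat v|_{s,\hat\CO}^2$ forward to $\CO$ with the lower bound of \eqref{scale_semi_tilde}, namely $|\hat v|_{s,\hat\CO}^2 \le |\det B|^{-1}\|B\|^{n+2s}|v|_{s,\CO}^2$. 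Again bookkeeping of the determinant and norm factors delivers $|\det B|^{-1}\|B\|^{n+2s}\|B^{-1}\|^{2s}$.

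The only real subtlety — and the step I would be most careful about — is the handling of the $L^2$ remainder term in Lemma~\ref{la_scale_semi3}: its scaling factor is $|\det B|$, which is \emph{not} the same as the factor $|\det B|^2\|B^{-1}\|^{n+2s}$ multiplying the Sobolev--Slobodeckij part, so one cannot simply factor out a single power of $B$. The clean way around this is to avoid Lemma~\ref{la_scale_semi3} in its two-term form and instead chain through $|\cdot|_{s,\CO}$ and the $L^2$-infimum separately, or — more elegantly — to observe that in both parts the quotient semi-norm only ever appears sandwiched between two applications of Lemma~\ref{la1}/\ref{la3} on a \emph{single} domain ($\hat\CO$), so that the mismatched scaling never actually has to be reconciled: the transformation is applied only to the ``pure'' semi-norms $|\cdot|_{s}$ and $|\cdot|_{L^2,H^1,s}$ (Lemma~\ref{la_scale_semi}), both of which scale by a single monomial in $B$. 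I expect the authors' proof to take precisely this route, so that the argument is just a transparent four-link chain with no estimation beyond multiplying the constants.
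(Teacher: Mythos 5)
Your proposal is correct and, after the self-correction in your final paragraph, follows exactly the paper's route: the fixed-domain equivalences $|\hat v|_{s,\hat\CO}^2 \le |\hat v|_{s,\hat\CO,\inf}^2 \le K(s,\hat\CO)^2\bigl(3+\tfrac{\CPFb(s,\hat\CO)^2}{s(1-s)}\bigr)|\hat v|_{L^2(\hat\CO),H^1(\hat\CO),s}^2$ (and its converse via $k$ and $\CPFa$) are applied only on the reference domain $\hat\CO$, sandwiched between the transformation laws of the two \emph{pure} semi-norms from Lemma~\ref{la_scale_semi}, so the two-term Lemma~\ref{la_scale_semi3} and its mismatched scaling never enter. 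The only blemish is a bookkeeping slip in part~(ii): the rearranged lower bound of \eqref{scale_semi_tilde} is $|\hat v|_{s,\hat\CO}^2 \le |\det B|^{-2}\|B\|^{n+2s}|v|_{s,\CO}^2$ (not $|\det B|^{-1}$), which is precisely what makes the factors multiply out to the stated constant $|\det B|^{-1}\|B\|^{n+2s}\|B^{-1}\|^{2s}$.
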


\begin{proof}
On a fixed domain $\hat\CO$ we obtain, by combining Lemmas~\ref{la1} and \ref{la3},
the equivalence of semi-norms:
\be \label{pf_thm1_1}
   |\hat v|_{s,\hat\CO}^2
   \le
   |\hat v|_{s,\hat\CO,\inf}^2
   \le
   K(s,\hat\CO)^2
   \Bigl( 3 + \frac {\CPFb(s,\hat\CO)^2}{s(1-s)} \Bigr)
   |\hat v|_{L^2(\hat\CO),H^1(\hat\CO),s}^2
\ee
and
\be \label{pf_thm1_2}
   |\hat v|_{L^2(\hat\CO),H^1(\hat\CO),s}^2
   \le
   k(s,\hat\CO)^{-2}
   |\hat v|_{s,\hat\CO,\inf}^2
   \le
   k(s,\hat\CO)^{-2}
   \Bigl(1+\CPFa(s,\hat\CO)^2\Bigr) |\hat v|_{s,\hat\CO}^2.
\ee
The first assertion of the theorem then follows by combining \eqref{pf_thm1_1} with the
transformation properties of the semi-norms by Lemma~\ref{la_scale_semi}:
\begin{align*}
   |v|_{s,\CO}^2
   &\le
   |\det B|^2 \|B^{-1}\|^{n+2s} |\hat v|_{s,\hat\CO}^2
   \\
   &\le
   |\det B|^2 \|B^{-1}\|^{n+2s}
   K(s,\hat\CO)^2
   \Bigl( 3 + \frac {\CPFb(s,\hat\CO)^2}{s(1-s)} \Bigr)
   |\hat v|_{L^2(\hat\CO),H^1(\hat\CO),s}^2
   \\
   &\le
   |\det B|\, \|B^{-1}\|^{n+2s} \|B\|^{2s}
   K(s,\hat\CO)^2
   \Bigl( 3 + \frac {\CPFb(s,\hat\CO)^2}{s(1-s)} \Bigr)
   |v|_{L^2(\CO),H^1(\CO),s}^2.
\end{align*}
The second assertion of the theorem is proved by a combination of \eqref{pf_thm1_2}
with the transformation properties by Lemma~\ref{la_scale_semi}.
\end{proof}

The next two theorems study the other pairs of semi-norms for equivalence
in combination with affine maps, $(|\cdot|_{s,\CO}, |\cdot|_{s,\CO,\inf})$ and
$(|\cdot|_{L^2(\CO),H^1(\CO),s}, |\cdot|_{s,\CO,\inf})$.

\begin{theorem} \label{thm2}
Let $\hat\CO\subset\R^n$ be a bounded, connected Lipschitz domain and let $\CO$ be the affinely
transformed domain defined by \eqref{trafo}. Then there hold the following relations.
\begin{enumerate}
\item[(i)]
\[
   |v|_{s,\CO}
   \le
   |v|_{s,\CO,\inf}
   \qquad\forall v\in H^s(\CO),\ \forall s\in (0,1),
\]
\item[(ii)]
\[
   |v|_{s,\CO,\inf}^2
   \le
   \Bigl( 1 + |\det B|^{-1} \|B\|^{n+2s} \CPFa(s,\hat\CO)^2 \Bigr)
   |v|_{s,\CO}^2
   \qquad\forall v\in H^s(\CO),\ \forall s\in (0,1)
\]
with $\CPFa(s,\hat\CO)$ being the number from Proposition~\ref{prop_PF1}.
\end{enumerate}
\end{theorem}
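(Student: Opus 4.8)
The plan is to handle the two parts separately, both by reducing to results already in hand. Part~(i) is immediate: by Lemma~\ref{la1} we already have $|v|_{s,\CO}^2\le|v|_{s,\CO,\inf}^2$ on any bounded connected Lipschitz domain, and taking square roots gives the claimed inequality verbatim. Since $\CO=F(\hat\CO)$ is such a domain whenever $\hat\CO$ is, nothing further is needed here; there is no dependence on the affine map in this direction. So the real content is Part~(ii).

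For Part~(ii) the strategy mirrors the proof of Theorem~\ref{thm1}: first get the estimate on the reference domain $\hat\CO$, then transport it to $\CO$ via the scaling lemmas. On $\hat\CO$, Lemma~\ref{la1} gives
\[
   |\hat v|_{s,\hat\CO,\inf}^2 \le \bigl(1+\CPFa(s,\hat\CO)^2\bigr)\,|\hat v|_{s,\hat\CO}^2 .
\]
Now I would invoke the scaling result for the quotient-type semi-norm. The natural tool is Lemma~\ref{la_scale_semi3}, but one must be careful: that lemma is stated as a two-sided estimate in which the $L^2$-infimum term appears \emph{unchanged up to the factor $|\det B|$} on both sides, so the genuine semi-norm equivalence it encodes is between $|v|_{s,\CO,\inf}^2$ and $|\det B|^2\|B^{-1}\|^{n+2s}|\hat v|_{s,\hat\CO}^2 + |\det B|\inf_c\|\hat v+c\|_{0,\hat\CO}^2$ (and the analogous lower bound). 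To pass cleanly to the statement of the theorem I would instead argue directly from the representation in Lemma~\ref{la1}: write
\[
   |v|_{s,\CO,\inf}^2 = |v|_{s,\CO}^2 + \inf_{c\in\fR}\|v+c\|_{0,\CO}^2,
\]
leave the first term as is, and bound the second term. For that, I would use the scaling $\|v+c\|_{0,\CO}^2=|\det B|\,\|\hat v+c\|_{0,\hat\CO}^2$ (the $L^2$-norm transforms exactly), then the reference-domain Poincar\'e--Friedrichs inequality (Proposition~\ref{prop_PF1}) applied to the minimizing constant, giving $\inf_c\|v+c\|_{0,\CO}^2\le|\det B|\,\CPFa(s,\hat\CO)^2\,|\hat v|_{s,\hat\CO}^2$ (the integral term in Proposition~\ref{prop_PF1} vanishes for the right $c$, exactly as noted in the proof of Lemma~\ref{la3}), and finally the scaling $|\hat v|_{s,\hat\CO}^2\le|\det B|^{-1}\|B\|^{n+2s}|v|_{s,\CO}^2$ from the lower bound in \eqref{scale_semi_tilde}. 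Combining,
\[
   \inf_{c\in\fR}\|v+c\|_{0,\CO}^2
   \le |\det B|^{-1}\|B\|^{n+2s}\,\CPFa(s,\hat\CO)^2\,|v|_{s,\CO}^2,
\]
and adding $|v|_{s,\CO}^2$ yields exactly the asserted bound with the factor $1+|\det B|^{-1}\|B\|^{n+2s}\CPFa(s,\hat\CO)^2$.

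The only point requiring care — and the likely main obstacle — is bookkeeping the direction of each scaling inequality: one needs the \emph{lower} bound for $|v|_{s,\CO}^2$ in terms of $|\hat v|_{s,\hat\CO}^2$ (equivalently, an \emph{upper} bound for $|\hat v|_{s,\hat\CO}^2$), which is the left half of \eqref{scale_semi_tilde} rewritten, and one must confirm that the constant $c$ realizing $\inf_c\|v+c\|_{0,\CO}$ on $\CO$ corresponds under the transformation to the constant realizing the reference-domain infimum, so that Proposition~\ref{prop_PF1}'s integral term can legitimately be dropped. Both are routine once spelled out, so the proof is short.
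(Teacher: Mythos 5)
Your proof is correct and follows essentially the same route as the paper: part (i) is just the first estimate of Lemma~\ref{la1}, and part (ii) combines the representation $|v|_{s,\CO,\inf}^2=|v|_{s,\CO}^2+\inf_{c}\|v+c\|_{0,\CO}^2$ with the exact $L^2$-scaling, Proposition~\ref{prop_PF1} on the reference domain, and the lower bound in \eqref{scale_semi_tilde}. Only note that the intermediate scaling you quote should read $|\hat v|_{s,\hat\CO}^2\le|\det B|^{-2}\,\|B\|^{n+2s}\,|v|_{s,\CO}^2$ (exponent $-2$, not $-1$); after multiplying by the factor $|\det B|$ from the $L^2$-scaling, your final displayed constant is nevertheless the correct one.
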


\begin{proof}
Assertion (i) is a repetition of the first estimate in Lemma~\ref{la1}.

To show the second assertion we use Proposition~\ref{prop_PF1} and
Lemma~\ref{la_scale_semi} to deduce that
\begin{align*}
   \inf_{c\in\fR} \|v+c\|_{0,\CO}^2
   &=
   |\det B|\, \inf_{c\in\fR} \|\hat v+c\|_{0,\hat \CO}^2
   \le
   |\det B|\, \CPFa(s,\hat\CO)^2 |\hat v|_{s,\hat\CO}^2
   \\
   &\le
   |\det B|^{-1} \|B\|^{n+2s} \CPFa(s,\hat\CO)^2 |v|_{s,\CO}^2.
\end{align*}
The assertion then follows by the definition of the semi-norm $|\cdot|_{s,\CO,\inf}$.
\end{proof}

\begin{theorem} \label{thm3}
Let $\hat\CO\subset\R^n$ be a bounded, connected Lipschitz domain and let $\CO$
be the affinely transformed domain defined by \eqref{trafo}.
Then there hold the following relations.
\begin{enumerate}
\item[(i)]
\[
   |v|_{L^2(\CO),H^1(\CO),s}^2
   \le
   \|B^{-1}\|^{2s} \max\{|\det B|^{-1} \|B\|^{n+2s}, 1\}\;
   k(s,\hat\CO)^{-2}
   |v|_{s,\CO,\inf}^2
\]
for any $v\in H^s(\CO)$ and $s\in (0,1)$
with $k(s,\hat\CO)$ from Proposition~\ref{prop_norms},
\item[(ii)]
\[
   |v|_{s,\CO,\inf}^2
   \le
   \|B\|^{2s} \max\{|\det B|\, \|B^{-1}\|^{n+2s}, 1\}\;
   K(s,\hat\CO)^2 \Bigl( 3 + \frac {\CPFb(s,\hat\CO)^2}{s(1-s)} \Bigr)
   |v|_{L^2(\CO),H^1(\CO),s}^2
\]
for any $v\in H^s(\CO)$ and $s\in (0,1)$
with $K(s,\hat\CO)$ from Proposition~\ref{prop_norms} and
     $\CPFb(s,\hat\CO)$ from Proposition~\ref{prop_PF2}.
\end{enumerate}
\end{theorem}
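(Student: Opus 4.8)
The plan is to mimic the proof of Theorem~\ref{thm1}: first establish the equivalence of $|\cdot|_{s,\hat\CO,\inf}$ and $|\cdot|_{L^2(\hat\CO),H^1(\hat\CO),s}$ on the reference domain $\hat\CO$, and then transport both estimates to $\CO$ via the transformation lemmas. The one genuinely new feature, compared with Theorem~\ref{thm1}, is that $|\cdot|_{s,\CO,\inf}$ does \emph{not} scale by a single factor: in the representation $|\hat v|_{s,\hat\CO,\inf}^2 = |\hat v|_{s,\hat\CO}^2 + \inf_{c\in\fR}\|\hat v+c\|_{0,\hat\CO}^2$ of Lemma~\ref{la1}, the two summands transform with the mutually incompatible factors $|\det B|^2\|B^{-1}\|^{n+2s}$ (resp.\ $|\det B|^2\|B\|^{-n-2s}$) and $|\det B|$. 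Replacing the resulting sum by the larger, resp.\ smaller, of the two factors is exactly what produces the $\max\{\,\cdot\,,1\}$ prefactors in the statement.

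Concretely, on $\hat\CO$ the combination of Lemmas~\ref{la1} and~\ref{la3} gives
\[
   k(s,\hat\CO)^2\,|\hat v|_{L^2(\hat\CO),H^1(\hat\CO),s}^2
   \le |\hat v|_{s,\hat\CO,\inf}^2
   \le K(s,\hat\CO)^2\Bigl(3+\frac{\CPFb(s,\hat\CO)^2}{s(1-s)}\Bigr)\,
       |\hat v|_{L^2(\hat\CO),H^1(\hat\CO),s}^2 .
\]
Next, from the two-sided bound of Lemma~\ref{la_scale_semi3}, estimating $|\det B|^2\|B\|^{-n-2s}|\hat v|_{s,\hat\CO}^2 + |\det B|\inf_{c}\|\hat v+c\|_{0,\hat\CO}^2$ from below by $\min\{|\det B|^2\|B\|^{-n-2s},|\det B|\}$ times the sum of the two terms, and similarly from above with a $\max$, one obtains the purely scalar scaling relations
\[
   |\det B|\min\{|\det B|\,\|B\|^{-n-2s},1\}\,|\hat v|_{s,\hat\CO,\inf}^2
   \le |v|_{s,\CO,\inf}^2
   \le |\det B|\max\{|\det B|\,\|B^{-1}\|^{n+2s},1\}\,|\hat v|_{s,\hat\CO,\inf}^2 .
\]

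To prove (i), I transport $|v|_{L^2(\CO),H^1(\CO),s}^2$ up to $\hat\CO$ by the upper bound of~\eqref{scale_semi_int}, apply the lower half of the reference-domain estimate, and then use the lower half of the scalar relation above, i.e.\ $|\hat v|_{s,\hat\CO,\inf}^2 \le |\det B|^{-1}\max\{|\det B|^{-1}\|B\|^{n+2s},1\}\,|v|_{s,\CO,\inf}^2$; one power of $|\det B|$ cancels and the claimed constant appears. For (ii), I start from the upper half of the scalar relation, apply the upper half of the reference-domain estimate, and finally bring $|\hat v|_{L^2(\hat\CO),H^1(\hat\CO),s}^2$ back down to $\CO$ via the lower bound of~\eqref{scale_semi_int} (so $|\hat v|_{L^2(\hat\CO),H^1(\hat\CO),s}^2 \le |\det B|^{-1}\|B\|^{2s}|v|_{L^2(\CO),H^1(\CO),s}^2$); again a factor $|\det B|$ cancels, leaving precisely the stated bound. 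The only thing requiring care — and the nearest thing to an obstacle — is the bookkeeping of the $\det B$ and matrix-norm powers along the chain, in particular applying the sum-to-$\max$/sum-to-$\min$ step in the correct direction so that, after the cancellations, the surplus factor collapses to $\max\{\,\cdot\,,1\}$; everything else is a direct concatenation of results already established.
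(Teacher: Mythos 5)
Your proposal is correct and follows essentially the same route as the paper: both proofs chain the reference-domain equivalence of Lemma~\ref{la3} with the transformation properties \eqref{scale_semi_int} and Lemma~\ref{la_scale_semi3}, and the only cosmetic difference is that you collapse the two differently scaling terms of Lemma~\ref{la_scale_semi3} into a single $\min$/$\max$ factor before chaining, whereas the paper carries the two terms along and takes the $\max$ at the end — the resulting constants are identical.
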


\begin{proof}
By Lemmas~\ref{la_scale_semi}, \ref{la3}, and \ref{la_scale_semi3} we obtain
\begin{align*}
\lefteqn{
   |v|_{L^2(\CO),H^1(\CO),s}^2
   \le
   |\det B|\,\|B^{-1}\|^{2s}
   |\hat v|_{L^2(\hat\CO),H^1(\hat\CO),s}^2
   \le
   |\det B|\,\|B^{-1}\|^{2s}
   k(s,\hat\CO)^{-2} |\hat v|_{s,\hat\CO,\inf}^2
}
   \\
   &\le
   |\det B|\,\|B^{-1}\|^{2s}
   k(s,\hat\CO)^{-2}
   \Bigl(|\det B|^{-2}\|B\|^{n+2s} |v|_{s,\CO}^2 +
         |\det B|^{-1} \inf_{c\in\fR} \|v+c\|_{0,\CO}^2\Bigr)
   \\
   &\le
   \|B^{-1}\|^{2s} \max\{|\det B|^{-1} \|B\|^{n+2s}, 1\}\;
   k(s,\hat\CO)^{-2} |v|_{s,\CO,\inf}^2.
\end{align*}
This is the first assertion. The second one follows analogously by the same lemmas:
\begin{align*}
   |v|_{s,\CO,\inf}^2
   &\le
   |\det B|^2 \|B^{-1}\|^{n+2s} |\hat v|_{s,\hat\CO}^2
   +
   |\det B|\, \inf_{c\in\fR} \|\hat v+c\|_{0,\hat\CO}^2
   \\
   &\le
   |\det B|\, \max\{|\det B|\, \|B^{-1}\|^{n+2s}, 1\}\;
   |\hat v|_{s,\hat\CO,\inf}^2
   \\
   &\le
   |\det B|\, \max\{|\det B|\, \|B^{-1}\|^{n+2s}, 1\}\;
   K(s,\hat\CO)^2 \Bigl( 3 + \frac {\CPFb(s,\hat\CO)^2}{s(1-s)} \Bigr)
   |\hat v|_{L^2(\hat\CO),H^1(\hat\CO),s}^2
   \\
   &\le
   \max\{|\det B|\, \|B^{-1}\|^{n+2s}, 1\}\; \|B\|^{2s}
   K(s,\hat\CO)^2 \Bigl( 3 + \frac {\CPFb(s,\hat\CO)^2}{s(1-s)} \Bigr)
   |v|_{L^2(\CO),H^1(\CO),s}^2.
\end{align*}
\end{proof}

We end this section with establishing uniform equivalence of the semi-norms
$|\cdot|_{s,\CO}$ and $|\cdot|_{L^2(\CO),H^1(\CO),s}$ for shape-regular domains.
Three of the four remaining bounds for other combinations of semi-norms are
uniform under further restrictions on the diameter of the domain.

Let us introduce some notation. We consider a bounded, connected Lipschitz domain
$\hat\CO\subset\R^n$ and maps of $\hat\CO$ onto domains $\CO$ where
the ratio $\rho_{\CO}:=D_{\CO}/d_{\CO}$ is controlled.
Here, $D_{\CO}$ denotes the diameter of $\CO$ and $d_{\CO}$ is the supremum of
the diameters of all balls contained in $\CO$. In the case of finite elements
(or convex polygons) boundedness of $\rho$ is referred to as {\em shape regularity}
of $\CO$. Also, when defining $d_{\CO}$ with balls with respect to which $\CO$ is
star-shaped, then $\rho_{\CO}$ is referred to as {\em chunkiness parameter}.

Using the notation \eqref{trafo} there holds
\be \label{aff1}
   \|B\|             \le \frac{D_{\CO}}{d_{\hat\CO}}
                     =   \frac{D_{\CO}}{D_{\hat\CO}}\, \rho_{\hat\CO},\quad
   \|B^{-1}\|        \le \frac{D_{\hat\CO}}{d_{\CO}}
                     =   \frac{D_{\hat\CO}}{D_{\CO}}\, \rho_{\CO},\quad
   \|B\|\,\|B^{-1}\| \le \rho_{\CO}\,\rho_{\hat\CO},
\ee
cf., e.g., \cite{Braess_97_FET}. Furthermore, we conclude that
\be \label{aff2}
   |\det B| = \frac {|\CO|}{|\hat\CO|} \le \frac {D_{\CO}^n}{d_{\hat\CO}^n},\quad
   |\det B|^{-1} \le \frac {D_{\hat\CO}^n}{d_{\CO}^n}
                 = \rho_{\CO}^n \frac {D_{\hat\CO}^n}{D_{\CO}^n}.
\ee
With this notation, the results of Theorems~\ref{thm1}--\ref{thm3} imply the following.

\begin{theorem} \label{thm_main}
Let $\CO$ be the affine map of a bounded connected Lipschitz domain $\hat\CO\subset\R^n$,
cf. \eqref{trafo}.

\begin{enumerate}
\item[(i)]
The semi-norms $|\cdot|_{s,\CO}$ and $|\cdot|_{L^2(\CO),H^1(\CO),s}$ are uniformly equivalent
for a family of shape-regular domains $\CO$:
\begin{align*}
   |v|_{s,\CO}^2
   &\le
   \rho_{\CO}^{n+2s} \rho_{\hat\CO}^{n+2s}
   K(s,\hat\CO)^2 \Bigl( 3 + \frac {\CPFb(s,\hat\CO)^2}{s(1-s)} \Bigr)
   |v|_{L^2(\CO),H^1(\CO),s}^2,
   \\
   |v|_{L^2(\CO),H^1(\CO),s}^2
   &\le
   \rho_{\CO}^{n+2s} \rho_{\hat\CO}^{n+2s}
   k(s,\hat\CO)^{-2} \Bigl( 1 + \CPFa(s,\hat\CO)^2 \Bigr)
   |v|_{s,\CO}^2
\end{align*}
for any $v\in H^s(\CO)$ and $s\in (0,1)$.
Here, $k(s,\hat\CO)$, $K(s,\hat\CO)$ are the numbers from Proposition~\ref{prop_norms} and
$\CPFa(s,\hat\CO)$, $\CPFb(s,\hat\CO)$ are as in Propositions~\ref{prop_PF1}, \ref{prop_PF2},
respectively.

\item[(ii)]
The semi-norms $|\cdot|_{s,\CO}$ and $|\cdot|_{s,\CO,\inf}$ are uniformly equivalent
for a family of uniformly bounded, shape-regular domains $\CO$:
\begin{align*}
   |v|_{s,\CO}
   &\le
   |v|_{s,\CO,\inf},
   \\
   |v|_{s,\CO,\inf}^2
   &\le
   \Bigl( 1 + \frac {D_{\CO}^{2s}}{D_{\hat\CO}^{2s}} \rho_{\CO}^n \rho_{\hat\CO}^{n+2s}
              \CPFa(s,\hat\CO)^2 \Bigr)
   |v|_{s,\CO}^2
\end{align*}
for any $v\in H^s(\CO)$ and $s\in (0,1)$.
Here, $\CPFa(s,\hat\CO)$ is the number from Proposition~\ref{prop_PF1}.

\item[(iii)]
\begin{enumerate}
\item[a)]
For a family of shape-regular domains $\CO$ whose diameters are bounded from below by a positive
constant, the semi-norm $|\cdot|_{L^2(\CO),H^1(\CO),s}$ is uniformly bounded by
$|\cdot|_{s,\CO,\inf}$:
\begin{align*}
   |v|_{L^2(\CO),H^1(\CO),s}^2
   &\le
   \max\{\rho_{\CO}^n \rho_{\hat\CO}^{n-2s}, D_{\CO}^{-2s} D_{\hat\CO}^{2s}\}\;
   \rho_{\CO}^{2s}
   k(s,\hat\CO)^{-2}
   |v|_{s,\CO,\inf}^2
\end{align*}
for any $v\in H^s(\CO)$ and $s\in (0,1)$.
\item[b)]
For a family of uniformly bounded, shape-regular domains $\CO$,
the semi-norm $|\cdot|_{s,\CO,\inf}$ is uniformly bounded by $|\cdot|_{L^2(\CO),H^1(\CO),s}$:
\[
\begin{split}
   |v|_{s,\CO,\inf}^2
   \le
   \max\{\rho_{\CO}^{n+2s} \rho_{\hat\CO}^n, D_{\CO}^{2s} D_{\hat\CO}^{-2s}\}\;
   \rho_{\hat\CO}^{2s}
\hspace{8em}
   \\
   K(s,\hat\CO)^2 \Bigl( 3 + \frac {\CPFb(s,\hat\CO)^2}{s(1-s)} \Bigr)
   |v|_{L^2(\CO),H^1(\CO),s}^2
\end{split}
\]
for any $v\in H^s(\CO)$ and $s\in (0,1)$.
\end{enumerate}
Here, $k(s,\hat\CO)$, $K(s,\hat\CO)$ are the parameters from Proposition~\ref{prop_norms},
and $\CPFb(s,\hat\CO)$ is the number from Proposition~\ref{prop_PF2}.
\end{enumerate}
\end{theorem}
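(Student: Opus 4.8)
The plan is to treat Theorem~\ref{thm_main} as a corollary of Theorems~\ref{thm1}--\ref{thm3}: each inequality asserted here is one of the inequalities proved there, with the abstract matrix quantities $|\det B|^{\pm1}$, $\|B\|$, $\|B^{-1}\|$ replaced by their geometric upper bounds from \eqref{aff1} and \eqref{aff2}. There is no new analytic content; the work is substitution and simplification, done case by case.

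For part~(i) I would start from Theorem~\ref{thm1}(i), whose transformation factor is $|\det B|\,\|B^{-1}\|^{n+2s}\|B\|^{2s}$. Inserting $|\det B|\le\rho_{\hat\CO}^n(D_\CO/D_{\hat\CO})^n$, $\|B^{-1}\|^{n+2s}\le\rho_\CO^{n+2s}(D_{\hat\CO}/D_\CO)^{n+2s}$ and $\|B\|^{2s}\le\rho_{\hat\CO}^{2s}(D_\CO/D_{\hat\CO})^{2s}$ and multiplying, the powers of $D_\CO$ and of $D_{\hat\CO}$ cancel exactly, leaving $\rho_\CO^{n+2s}\rho_{\hat\CO}^{n+2s}$. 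This cancellation is the crux of the theorem: it shows that the equivalence $|\cdot|_{s,\CO}\simeq|\cdot|_{L^2(\CO),H^1(\CO),s}$ is scale invariant, hence uniform over any family of shape-regular images of $\hat\CO$. The reverse bound follows the same way from Theorem~\ref{thm1}(ii), now using $|\det B|^{-1}\le\rho_\CO^n(D_{\hat\CO}/D_\CO)^n$, and again the diameter factors cancel.

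For part~(ii) the first line is exactly Theorem~\ref{thm2}(i), which carries no transformation factor. For the second line I would bound the factor $|\det B|^{-1}\|B\|^{n+2s}$ occurring in Theorem~\ref{thm2}(ii) by $\rho_\CO^n\rho_{\hat\CO}^{n+2s}(D_\CO/D_{\hat\CO})^{2s}$; this time a factor $D_\CO^{2s}$ survives, which is precisely why the bound is uniform only for a family whose diameters are bounded from above. Part~(iii) is handled in the same spirit, starting from Theorem~\ref{thm3}(i) and~(ii). The only extra manipulation is to distribute the leftover scalar prefactor through the maximum --- for instance $\|B^{-1}\|^{2s}\max\{|\det B|^{-1}\|B\|^{n+2s},1\}=\max\{\|B^{-1}\|^{2s}|\det B|^{-1}\|B\|^{n+2s},\,\|B^{-1}\|^{2s}\}$ --- and then estimate each of the two arguments by \eqref{aff1}, \eqref{aff2}; the two resulting geometric quantities are the two entries of the maximum displayed, and inspection shows which diameter restriction (bounded below for~(iii)a, bounded above for~(iii)b) is needed to make the whole factor uniform.

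The only real obstacle is bookkeeping: tracking the exponents of $\|B\|$, $\|B^{-1}\|$ and $|\det B|^{\pm1}$ through the products, observing the (non-)cancellation of the diameter powers, and correctly distributing prefactors across the $\max\{\cdot,\cdot\}$ terms in part~(iii). Since every step is routine, it suffices to carry out one representative case in full detail and remark that the remaining ones follow identically.
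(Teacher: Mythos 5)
Your proposal is correct and is essentially identical to the paper's own proof, which establishes Theorem~\ref{thm_main} in one line by combining Theorems~\ref{thm1}--\ref{thm3} with the bounds \eqref{aff1}, \eqref{aff2} --- exactly the substitution and exponent bookkeeping you describe, including the cancellation of the diameter powers in part (i) and the distribution of the prefactor over the maximum in part (iii). (One minor bookkeeping remark: in (iii)~a) the direct substitution gives $\rho_{\hat\CO}^{n+2s}$ rather than the displayed $\rho_{\hat\CO}^{n-2s}$ in the first entry of the maximum, but since $\rho_{\hat\CO}$ pertains to the fixed reference domain this has no bearing on the uniformity assertion.)
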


\begin{proof}
The assertions (i)--(iii) are a combination of Theorems~\ref{thm1}--\ref{thm3},
respectively, with the bounds provided by \eqref{aff1}, \eqref{aff2}.
\end{proof}

The uniform equivalence of the semi-norms $|\cdot|_{s,\CO}$ and
$|\cdot|_{L^2(\CO),H^1(\CO),s}$ for shape-regular domains is based on what one calls
their {\em scaling property}. It means that both semi-norms for functions on a domain
$\CO$ are uniformly equivalent to the respective semi-norm of the affinely
transformed functions onto a fixed domain $\hat\CO$, when one of the semi-norms is
multiplied by an appropriate number (it is a power of the diameter of $\CO$).
This property applies also to the norms $\|\cdot\|_{L^2(\CO),H^1_0(\CO),s}$
and $\|\cdot\|_{\sim,s,\CO}$, cf.~Lemma~\ref{la_scale}. Scaling properties are relevant
for the error analysis of piecewise polynomial approximations. We formulate the
result as a corollary to Lemmas~\ref{la_scale} and \ref{la_scale_semi}.

\begin{corollary} \label{cor_scale}
The norms $\|\cdot\|_{L^2(\CO),H^1_0(\CO),s}$, $\|\cdot\|_{\sim,s,\CO}$
and semi-norms $|\cdot|_{s,\CO}$, $|\cdot|_{L^2(\CO),H^1(\CO),s}$ are scalable
of order $D_{\CO}^{n-2s}$:
\[
\begin{split}
   D_{\CO}^{n-2s} \rho_{\CO}^{-n} D_{\hat\CO}^{2s-n} \rho_{\hat\CO}^{-2s}
   \|\hat v\|_{L^2(\hat \CO),H^1_0(\hat \CO),s}^2
   \le
   \|v\|_{L^2(\CO),H^1_0(\CO),s}^2
\hspace{6em}
   \\
   \le
   D_{\CO}^{n-2s} \rho_{\CO}^{2s} D_{\hat\CO}^{2s-n} \rho_{\hat\CO}^{n}
   \|\hat v\|_{L^2(\hat \CO),H^1_0(\hat \CO),s}^2,
   \\[1em]
   D_{\CO}^{n-2s} \rho_{\CO}^{-n} D_{\hat\CO}^{2s-n} \rho_{\hat\CO}^{-2s}
   \min\{\rho_{\CO}^{-n} \rho_{\hat\CO}^{-n},1\}\;
   \|\hat v\|_{\sim,s,\hat\CO}^2
   \le
   \|v\|_{\sim,s,\CO}^2
\hspace{6em}
   \\
   \le
   D_{\CO}^{n-2s} \rho_{\CO}^{2s} D_{\hat\CO}^{2s-n} \rho_{\hat\CO}^n
   \max\{\rho_{\CO}^n \rho_{\hat\CO}^n,1\}\;
   \|\hat v\|_{\sim,s,\hat\CO}^2
\end{split}
\]
for any $v\in \tilde H^s(\CO)$ and $s\in (0,1)$, and
\[
\begin{split}
   D_{\CO}^{n-2s} \rho_{\CO}^{-n} D_{\hat\CO}^{2s-n} \rho_{\hat\CO}^{-2s}
   |\hat v|_{L^2(\hat \CO),H^1(\hat \CO),s}^2
   \le
   |v|_{L^2(\CO),H^1(\CO),s}^2
\hspace{9em}
   \\
   \le
   D_{\CO}^{n-2s} \rho_{\CO}^{2s} D_{\hat\CO}^{2s-n} \rho_{\hat\CO}^{n}
   |\hat v|_{L^2(\hat \CO),H^1(\hat \CO),s}^2,
   \\[1em]
   D_{\CO}^{n-2s} \rho_{\CO}^{-2n} D_{\hat\CO}^{2s-n} \rho_{\hat\CO}^{-n-2s}
   |\hat v|_{s,\hat\CO}^2
   \le
   |v|_{s,\CO}^2
\hspace{9em}
   \\
   \le
   D_{\CO}^{n-2s} \rho_{\CO}^{n+2s} D_{\hat\CO}^{2s-n} \rho_{\hat\CO}^{2n}
   |\hat v|_{s,\hat\CO}^2
\end{split}
\]
for any $v\in H^s(\CO)$ and $s\in (0,1)$.
\end{corollary}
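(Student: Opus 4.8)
The plan is to derive Corollary~\ref{cor_scale} directly from Lemmas~\ref{la_scale} and \ref{la_scale_semi} by inserting the geometric bounds \eqref{aff1} and \eqref{aff2} into each of the four two-sided estimates. The essential point is simply bookkeeping: each of those lemmas gives equivalence of a (semi-)norm on $\CO$ with the same (semi-)norm of the transformed function $\hat v$ on $\hat\CO$, up to factors built from $|\det B|$, $|\det B|^{-1}$, $\|B\|$, and $\|B^{-1}\|$; one then replaces these by their upper bounds $|\det B|\le D_{\CO}^n/d_{\hat\CO}^n = D_{\CO}^n D_{\hat\CO}^{-n}\rho_{\hat\CO}^n$, $|\det B|^{-1}\le \rho_{\CO}^n D_{\hat\CO}^n D_{\CO}^{-n}$, $\|B\|\le D_{\CO} D_{\hat\CO}^{-1}\rho_{\hat\CO}$, $\|B^{-1}\|\le D_{\hat\CO} D_{\CO}^{-1}\rho_{\CO}$, being careful that on the lower-bound side of each estimate the factor appears in the denominator (or as an inverse), so that there one uses these same bounds but in the reciprocal direction.

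Concretely, I would proceed through the four statements in the order listed. First, for $\|v\|_{L^2(\CO),H^1_0(\CO),s}^2$ I take \eqref{scale_int}: the upper bound carries $|\det B|\,\|B^{-1}\|^{2s}$, which under \eqref{aff1}–\eqref{aff2} becomes $(D_{\CO}^n D_{\hat\CO}^{-n}\rho_{\hat\CO}^n)(D_{\hat\CO} D_{\CO}^{-1}\rho_{\CO})^{2s} = D_{\CO}^{n-2s} D_{\hat\CO}^{2s-n}\rho_{\CO}^{2s}\rho_{\hat\CO}^n$, exactly the claimed upper factor; the lower bound carries $|\det B|\,\|B\|^{-2s}$, and here I bound $|\det B|$ from below by $1/|\det B^{-1}|$ with $|\det B^{-1}|\le \rho_{\CO}^n D_{\hat\CO}^n D_{\CO}^{-n}$ and $\|B\|^{-2s}$ from below via $\|B\|\le D_{\CO} D_{\hat\CO}^{-1}\rho_{\hat\CO}$, giving $D_{\CO}^{n-2s} D_{\hat\CO}^{2s-n}\rho_{\CO}^{-n}\rho_{\hat\CO}^{-2s}$. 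Second, for $\|v\|_{\sim,s,\CO}^2$ I do the same with \eqref{scale_tilde}, the only extra work being the $\min\{|\det B|\,\|B\|^{-n},1\}$ and $\max\{|\det B|\,\|B^{-1}\|^n,1\}$ factors: in the $\max$ I bound $|\det B|\,\|B^{-1}\|^n\le (D_{\CO}^n D_{\hat\CO}^{-n}\rho_{\hat\CO}^n)(D_{\hat\CO} D_{\CO}^{-1}\rho_{\CO})^n = \rho_{\CO}^n\rho_{\hat\CO}^n$, so $\max\{\cdot,1\}\le\max\{\rho_{\CO}^n\rho_{\hat\CO}^n,1\}$, and symmetrically the $\min$ term is bounded below by $\min\{\rho_{\CO}^{-n}\rho_{\hat\CO}^{-n},1\}$ using $|\det B|\,\|B\|^{-n}\ge (\rho_{\CO}^n D_{\hat\CO}^n D_{\CO}^{-n})^{-1}(D_{\CO} D_{\hat\CO}^{-1}\rho_{\hat\CO})^{-n}=\rho_{\CO}^{-n}\rho_{\hat\CO}^{-n}$. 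Third and fourth, for the two semi-norms I repeat the argument with \eqref{scale_semi_int} and \eqref{scale_semi_tilde}; note \eqref{scale_semi_tilde} has the factor $|\det B|^2\,\|B^{-1}\|^{n+2s}$ upstairs, which yields $(D_{\CO}^n D_{\hat\CO}^{-n}\rho_{\hat\CO}^n)^2(D_{\hat\CO} D_{\CO}^{-1}\rho_{\CO})^{n+2s}=D_{\CO}^{n-2s} D_{\hat\CO}^{2s-n}\rho_{\CO}^{n+2s}\rho_{\hat\CO}^{2n}$, and correspondingly $|\det B|^2\,\|B\|^{-n-2s}$ downstairs gives the factor $D_{\CO}^{n-2s} D_{\hat\CO}^{2s-n}\rho_{\CO}^{-2n}\rho_{\hat\CO}^{-n-2s}$, matching the stated bounds.

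There is essentially no obstacle here beyond care with exponents and with the direction of inequalities; the one mildly delicate point is the interplay of the two diameters, namely remembering that $d_{\hat\CO}=D_{\hat\CO}/\rho_{\hat\CO}$ and $d_{\CO}=D_{\CO}/\rho_{\CO}$, so that the fixed reference domain contributes powers $D_{\hat\CO}^{2s-n}$ and $\rho_{\hat\CO}$ in the way shown — these reference quantities should be viewed as constants, while $D_{\CO}$ carries the genuine scaling of order $D_{\CO}^{n-2s}$. Thus the proof is a single sentence: \emph{the four chains of inequalities follow by substituting the bounds \eqref{aff1}, \eqref{aff2} into the transformation estimates \eqref{scale_int}, \eqref{scale_tilde} of Lemma~\ref{la_scale} and \eqref{scale_semi_int}, \eqref{scale_semi_tilde} of Lemma~\ref{la_scale_semi}.} I would write it exactly that compactly.
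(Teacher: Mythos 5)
Your proposal is correct and is exactly the paper's proof: the corollary is obtained by substituting the bounds \eqref{aff1}, \eqref{aff2} into the transformation estimates of Lemmas~\ref{la_scale} and \ref{la_scale_semi}, with the reciprocal bounds used on the lower-bound sides as you describe. All the exponent bookkeeping in your computation checks out against the stated constants.
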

 
\begin{proof}
The bounds are a combination of Lemmas~\ref{la_scale} and \ref{la_scale_semi}
with \eqref{aff1}, \eqref{aff2}.
\end{proof}

\begin{remark}
The estimate by Theorem~\ref{thm_main} (iii) a) breaks down
when $D_{\CO}\to 0$. In fact, for a family of scaled domains $\CO_h$ with $D_{\CO_h}=h$
and a non-constant function $v$ scaled to a family $\{v_h\}$ of functions on $\{\CO_h\}$,
$|v_h|_{L^2(\CO_h),H^1(\CO_h),s}^2 \simeq h^{n-2s}$ by Corollary~\ref{cor_scale}
whereas $|v_h|_{s,\CO_h,\inf}^2\ge \inf_{c\in\R}\|v_h-c\|_{0,\CO_h}^2\simeq h^n$.
Therefore, the dependence on $D_{\CO}$ like $D_{\CO}^{-2s}$ of the upper bound
in Theorem~\ref{thm_main} (iii) a) is optimal.
\end{remark}

\noindent
{\bf Acknowledgment.} We thank an anonymous referee for constructive remarks.



\begin{thebibliography}{10}

\bibitem{Adams}
{\sc R.~Adams}, {\em Sobolev Spaces}, Academic press, New York, San Francisco,
  London, 1979.

\bibitem{BerghL_76_IS}
{\sc J.~Bergh and J.~L{\"o}fstr{\"o}m}, {\em Interpolation Spaces}, no.~223 in
  Grundlehren der mathe\-matischen Wissenschaften, Springer-Verlag, Berlin,
  1976.

\bibitem{Braess_97_FET}
{\sc D.~Braess}, {\em Finite Elements: Theory, Fast Solvers, and Applications
  in Solid Mechanics}, Cambridge University Press, Cambridge, 1997.

\bibitem{BrennerS_94_MTF}
{\sc S.~C. Brenner and L.~R. Scott}, {\em The Mathematical Theory of Finite
  Element Methods}, no.~15 in Texts in Applied Mathematics, Springer-Verlag,
  New York, 1994.

\bibitem{DevoreS_93_BSD}
{\sc R.~A. Devore and R.~C. Sharpley}, {\em Besov spaces on domains in
  {$\R^d$}}, Trans. Amer. Math. Soc., 335 (1993), pp.~843--864.

\bibitem{DrelichmanD_08_IPI}
{\sc I.~Drelichman and R.~G. Dur{\'a}n}, {\em Improved {P}oincar\'e
  inequalities with weights}, J. Math. Anal. Appl., 347 (2008), pp.~286--293.

\bibitem{DupontS_80_PAF}
{\sc T.~Dupont and R.~Scott}, {\em Polynomial approximation of functions in
  {Sobolev} spaces}, Math. Comp., 34 (1980), pp.~441--463.

\bibitem{ErvinH_06_ABS}
{\sc V.~J. Ervin and N.~Heuer}, {\em An adaptive boundary element method for
  the exterior {Stokes} problem in three dimensions}, IMA J. Numer. Anal., 26
  (2006), pp.~297--325.

\bibitem{Faermann_02_LAS}
{\sc B.~Faermann}, {\em Localization of the {Aronszajn}-{Slobodeckij} norm and
  application to adaptive boundary element methods, {Part II.} {The}
  three-dimensional case}, Numer. Math., 92 (2002), pp.~467--499.

\bibitem{Grisvard_85_EPN}
{\sc P.~Grisvard}, {\em Elliptic Problems in Nonsmooth Domains}, Pitman
  Publishing Inc., Boston, 1985.

\bibitem{Heuer_01_ApS}
{\sc N.~Heuer}, {\em Additive {Schwarz} method for the $p$-version of the
  boundary element method for the single layer potential operator on a plane
  screen}, Numer. Math., 88 (2001), pp.~485--511.

\bibitem{HsiaoW_08_BIE}
{\sc G.~C. Hsiao and W.~L. Wendland}, {\em Boundary Integral Equations},
  Springer, 2008.

\bibitem{LionsMagenes}
{\sc J.-L. Lions and E.~Magenes}, {\em Non-Homogeneous Boundary Value Problems
  and Applications I}, Springer-Verlag, New York, 1972.

\bibitem{McLean_00_SES}
{\sc W.~McLean}, {\em Strongly Elliptic Systems and Boundary Integral
  Equations}, Cambridge University Press, 2000.

\bibitem{MouhotRS_11_FPI}
{\sc C.~Mouhot, E.~Russ, and Y.~Sire}, {\em Fractional {Poincar\'e}
  inequalities for general measures}, J. Math. Pures Appl., 95 (2011),
  pp.~72--84.

\bibitem{Necas_67_MDT}
{\sc J.~Ne{\v c}as}, {\em Les M\'ethodes Directes en Th\'eorie des \'Equations
  Elliptiques}, Academia, Prague, 1967.

\bibitem{PayneW_60_OPI}
{\sc L.~E. Payne and H.~F. Weinberger}, {\em An optimal {Poincar\'e}-inequality
  for convex domains}, Arch. Rational Mech. Anal., 5 (1960), pp.~286--292.

\bibitem{Verfuerth_99_NPA}
{\sc R.~{Verf\"uhrt}}, {\em A note on polynomial approximation in {Sobolev}
  spaces}, M2AN Math. Model. Numer. Anal., 33 (1999), pp.~715--719.

\end{thebibliography}

\end{document}